\newcommand{\NN}{\mathbb{N}}
\newcommand{\RR}{\mathbb{R}}
\newcommand{\ZZ}{\mathbb{Z}}
\newcommand{\Om}{\Omega}
\newcommand{\abs}[1]{\left\vert #1 \right\vert}
\newcommand{\norm}[1]{\left\Vert #1 \right\Vert}
\newcommand{\inner}[1]{\left\langle#1\right\rangle}
\newcommand{\D}{\mathcal{D}}
\newcommand{\N}{\mathcal{N}}
\newcommand{\CC}{\mathbb{C}}
\newcommand{\C}{\mathcal{C}}
\newcommand{\m}{\mathfrak{m}}
\newcommand{\n}{\mathfrak{n}}
\renewcommand{\d}{\mathrm{d}}
\renewcommand{\epsilon}{\varepsilon}
\numberwithin{equation}{section}
\theoremstyle{definition}
\newtheorem{defn}{Definition}[section]
\theoremstyle{remark}
\newtheorem{rem}{Remark}[section]
\theoremstyle{plain}
\newtheorem{thm}{Theorem}
\newtheorem{prop}{Proposition}[section]
\newtheorem{lem}[prop]{Lemma}
\newtheorem{cor}[prop]{Corollary}
\title[Entire Nodal Solutions of Prescribed Symmetry]{Entire Nodal Solutions with Prescribed Symmetry to Caffarelli-Kohn-Nirenberg Equations}
\author{Edward Chernysh}
\date{\today}
\begin{document}

	\begin{abstract}
		We establish the existence of sign-changing entire solutions to weighted critical $p$-Laplace equations of the Caffarelli-Kohn-Nirenberg type. In doing so, we investigate classes of symmetry and show that, for suitable  symmetry configurations, there exists a non-trivial solution which changes sign and respects the corresponding prescribed symmetry. In addition, we describe conditions under which these symmetry-types are incompatible. Especially, we demonstrate the existence of entire nodal solutions for an infinite number of distinct symmetry types.
	\end{abstract}

	\maketitle

	\section{Introduction}
	
	In this article, we investigate the existence and symmetries of sign-changing solutions to the weighted critical equations of the Caffarelli-Kohn-Nirenberg type:
	\begin{equation}\label{eq:limitProblem}
		\begin{cases}
			-\operatorname{div}\left( \abs{x}^{-ap} \abs{\nabla u}^{p-2} \nabla u\right) = \abs{x}^{-bq}\abs{u}^{q-2}u & \text{in } \RR^n,\\
			u \in \D^{1,p}_a(\RR^n,0)
		\end{cases}
	\end{equation}
	where we assume that $n \ge 4$, $1 < p < n$, and $q$ is the critical Caffarelli-Kohn-Nirenberg (henceforth abbreviated CKN) exponent, and
	\begin{equation}\label{eq:conditions}
		q := \frac{np}{n-p(1+a-b)}, \quad a < \frac{n-p}{p}. \quad \text{and} \quad a \leq b < a+1.
	\end{equation}
	Problem \eqref{eq:limitProblem} is the Euler-Lagrange equation related to the CKN inequality in $\RR^n$ (see Caffarelli-Kohn-Nirenberg \cite{caffarelli1984first}).
	The space $\D^{1,p}_a(\RR^n,0)$ is the completion of $C_c^\infty(\RR^n)$ under the homogenous weighted norm
	\(
	\norm{u}_{\D_a^{1,p}(\RR^n,0)} := \norm{\nabla u}_{L^p(\RR^n, \abs{x}^{-ap})}.
	\)
	We refer the reader to the preface of \S3, and to Chapter 2 of Chernysh \cite{mscChernysh}, for more details on $\D_a^{1,p}(\RR^n,0)$. We remark that when $a=b=0$, we simply write $\D^{1,p}(\RR^n)$. In this case, \eqref{eq:limitProblem} reduces to the critical $p$-Laplace equation 
	\begin{equation}\label{eq:unweighted}
		\begin{cases}
			-\Delta_p u = \abs{u}^{p^\ast-2}u & \text{in } \RR^n,\\
			u \in \D^{1,p}(\RR^n),
		\end{cases}
	\end{equation}
	where $\Delta_p u := \operatorname{div}\left( \abs{\nabla u}^{p-2}\nabla u\right)$ denotes the $p$-Laplacian, applied to a function $u$, and $p^\ast$ is the critical Sobolev exponent. Should we further restrict to the case $p=2$, we recover the Yamabe problem, which is itself of important interest.
	
	The existence of nodal solutions to these quasilinear elliptic critical equations have been studied extensively. Addressing literature on the existence of nodal solutions, we first and foremost mention the important work of Ding \cite{ding}, which established the existence of infinitely many sign-changing solutions of finite energy (i.e. solutions belonging to $\D^{1,2}(\RR^n)$) when $a=b = 0$ and $p=2$. Here, Ding exploited the invariance of the unweighted critical Laplace problem under M\"obius transformations. We should also take a moment to mention that, for $p=2$ and $a=b=0$, other distinct sign-changing solutions were found by Pino-Musso-Pacard-Pistoia \cite{musso} by utilizing Lyapunov-Schmidt reduction. Other solutions were found by the same authors in \cite{del2013torus}. We refer the reader to the introduction of Clapp-Rios \cite{clappRios} for details on why these approaches do not apply in the case $p \ne 2$ and more information on sign-changing solutions in this broad case. With regards to the case of the sphere, we point to works of Fern\'andez-Palmas-Petean \cite{fernandezPalmasPetean}, Fern\'andez-Petean \cite{fernandez2020low}, and Medina-Musso \cite{medinaMusso}. For further reference on the literature involving the existence of nodal solutions in the context of the Yamabe problem, we direct the reader to Premoselli-V\'etois \cite[\S2]{premo}. 
	
	With important regard to the critical $p$-Laplace equation \eqref{eq:unweighted}, we mention that Clapp-Rios \cite{clappRios} demonstrated the existence of sign-changing solutions to problem \eqref{eq:unweighted} by studying the symmetries of test functions under the action of carefully chosen subgroups of the orthogonal group $O(n)$ (see also Clapp \cite{Clapp_pure} for the case with $p=2$). An extension of this approach was recently used by Clapp-Faya-Salda\~na \cite{Clapp_2024}, for the Yamabe equation, and Clapp-Vicente-Ben\'itez \cite{clapp2026entire} to construct solutions exhibiting new symmetries for \eqref{eq:unweighted} and associated competitive systems. As for weighted equations of the Caffarelli-Kohn-Nirenberg type with subcritical terms, we observe that Szulkin-Waliullah \cite{szulkin2012sign} constructed sign-changing solutions when $0 \le a < b$.

	Our aim in this paper is twofold. First, we seek to extend the existence of sign-changing solutions to the weighted problem \eqref{eq:limitProblem} for the full range of parameters \eqref{eq:conditions}. Second, we investigate distinct types of symmetries one might impose on a sign-changing solution of \eqref{eq:limitProblem}, by generalizing the groups constructed by Clapp-Rios \cite{clappRios} and Clapp-Vicente-Ben\'itez \cite{clapp2026entire}. To formally encode these symmetries, we introduce the following notation. Given $n \ge 4$, we write $k = k(n) := \left\lfloor \frac{n}{2}\right\rfloor - 1$. We will denote by $\m = (m_1,\dots,m_k)$ a tuple of non-negative integers and, for an integer $\alpha \ge 0$, we define the conditions 
	\begin{align}
		&0 < 2\chi_\alpha  + \textstyle\sum_{j=1}^k m_j(j+1) \le n/2,\label{eq:primeCondition}\\
		&\textstyle 2\chi_\alpha + \sum_{j=1}^k m_j(j+1) \ne (n-1)/2.\label{eq:primeCondition2}
	\end{align}
	Here, $\chi_\alpha = 0$ if $\alpha = 0$ and $\chi_\alpha = 1$ otherwise. If $\alpha = 0$, we require that $\m \ne \mathbf{0}$. It should be noted that \eqref{eq:primeCondition2} is important to the case $a=b \ne 0$ and is the reason for the condition $n \ne 5$ in Theorem \ref{thm:main}.
		
	Roughly speaking, the tuple $\m$ will encode a precise symmetry configuration describing symmmetry under synchronous rotations in specified coordinate tuples, anti-symmetry  with respect to reflections and coordinate cycling, and symmetry under isometries of other coordinates. The $j$\textsuperscript{th} entry of $\m$ will define how many distinct actions occur on coordinate tuples of length $2(j+1)$. The parameter $\alpha \ge 0$ dictates the number of ``pinwheel-type''-symmetries the function will possess (see also Clapp-Vicente-Ben\'itez \cite{clapp2026entire}). Essentially, the parameter $\alpha$ is responsible for bringing an infinity of distinctive $\m$-type symmetries to solutions of \eqref{eq:limitProblem}. We refer the reader to \S2 for the precise definition of an $(\alpha,\m)$-symmetric function. However, we point out that the solutions of \eqref{eq:unweighted} obtained in Clapp-Rios \cite{clappRios} are all of $(0,\m)$-type symmetry for $\m = (j,\mathbf{0})$, with $j \ge 1$. In the case of Clapp-Vicente-Ben\'itez \cite{clapp2026entire}, no information is available on the symmetries the solutions satisfy in the last $(n-4)$-coordinates, although the $(\alpha,\m)$-symmetric solutions we produce are certain to include solutions with new symmetries.

	Our main result states that there exist solutions with $(\alpha,\m)$-symmetry, corresponding to every pair $(\alpha,\m)$ satisfying \eqref{eq:primeCondition} and, if $a=b\ne0$, condition \eqref{eq:primeCondition2}.

	\begin{thm}\label{thm:main}
		Fix a dimension $n\ge 4$, let $\alpha \ge 0$ an integer, and $\m$ be a tuple of non-negative integers such that $(\alpha,\m)$ satisfies \eqref{eq:primeCondition}.
		\begin{enumerate}
			\item  When $a < b$ or $a=b=0$, there exists a non-trivial sign-changing solution $u_{\alpha,\m}$ to \eqref{eq:limitProblem} which is $(\alpha,\m)$-symmetric.
			\item  When $a =b \ne 0$, $n \ne 5$, and $(\alpha,\m)$ satisfies condition \eqref{eq:primeCondition2}, there exists a non-trivial sign-changing solution $u_{\alpha,\m}$ to \eqref{eq:limitProblem} which is $(\alpha,\m)$-symmetric.
		\end{enumerate}
		Moreover, for any $(\beta,\n)$ satisfying \eqref{eq:primeCondition}:
		\begin{enumerate}[label=(\roman*)]
			\item if $\alpha \ne \beta$ then $u_{\alpha,\m} \ne u_{\beta,\n}$,
			\item if $\alpha = \beta$ and $\m \ne \n$, then $u_{\alpha,\m} \ne u_{\beta,\n}$ provided $\gcd(\m,\n) =1$ or $\m \lesssim \n$.
		\end{enumerate}
	\end{thm}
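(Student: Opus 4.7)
The plan is to combine Palais's Principle of Symmetric Criticality with a symmetric concentration--compactness argument in the vein of Clapp--Rios \cite{clappRios} and Clapp--Vicente--Ben\'itez \cite{clapp2026entire}, adapted to the weighted space $\D_a^{1,p}(\RR^n,0)$. For each admissible pair $(\alpha,\m)$ I would use the group $G_{\alpha,\m}\subset O(n)$ and the non-trivial character $\phi_{\alpha,\m}\colon G_{\alpha,\m}\to\{\pm 1\}$ constructed in \S2, and denote by $\D^{\phi_{\alpha,\m}} \subset \D_a^{1,p}(\RR^n,0)$ the closed subspace of equivariant functions $u(gx)=\phi_{\alpha,\m}(g)u(x)$. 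Because both weights $\abs{x}^{-ap}$ and $\abs{x}^{-bq}$ are $O(n)$-invariant, the energy
\[
J(u):=\frac{1}{p}\int_{\RR^n}\abs{x}^{-ap}\abs{\nabla u}^p\,\d x-\frac{1}{q}\int_{\RR^n}\abs{x}^{-bq}\abs{u}^q\,\d x
\]
is $G_{\alpha,\m}$-invariant, so by Palais its critical points in $\D^{\phi_{\alpha,\m}}$ are weak solutions of \eqref{eq:limitProblem}; non-triviality of $\phi_{\alpha,\m}$ automatically forces them to change sign.

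Existence would then be obtained by minimizing $J$ on the Nehari manifold $\N_{\alpha,\m} \subset \D^{\phi_{\alpha,\m}}$. The crux is a symmetric Palais--Smale condition valid below a threshold of the form $\kappa(\alpha,\m)\,S_{a,b}^{\ast}$, where $S_{a,b}^{\ast}$ is the energy of the CKN extremal and $\kappa(\alpha,\m)\ge 2$ is the minimum cardinality of a $G_{\alpha,\m}$-orbit on which $\phi_{\alpha,\m}$ is trivial. Conditions \eqref{eq:primeCondition} and \eqref{eq:primeCondition2} are engineered precisely so that $\kappa(\alpha,\m)\ge 2$ on all of $\RR^n\setminus\{0\}$ (respectively, off the distinguished cylindrical axis when $a=b\ne 0$). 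A bubble-gluing test function, obtained by alternating signs of rescaled CKN extremals over a pair of well-separated $G_{\alpha,\m}$-orbits, then delivers $c_{\alpha,\m}:=\inf_{\N_{\alpha,\m}}J < \kappa(\alpha,\m)\,S_{a,b}^{\ast}$, so the infimum is attained by a sign-changing $(\alpha,\m)$-symmetric solution.

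The main technical obstacle is the concentration analysis of equivariant Palais--Smale sequences in the weighted setting, particularly when $a=b\ne 0$, where the CKN extremals are cylindrically (rather than spherically) symmetric and concentration may occur along a one-dimensional axis. To preclude this pathology, one must verify that $G_{\alpha,\m}$ admits no non-zero $\phi_{\alpha,\m}$-equivariant cylindrical function, which is precisely the content of \eqref{eq:primeCondition2}. The exclusion $n\ne 5$ in part (2) follows directly: when $n=5$ one has $k=1$, so the left-hand side of \eqref{eq:primeCondition} is necessarily even, forcing the forbidden value $(n-1)/2 = 2$ and making \eqref{eq:primeCondition} and \eqref{eq:primeCondition2} jointly infeasible.

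For the distinctness claims (i)--(ii), I would argue by contradiction. If $u_{\alpha,\m}=u_{\beta,\n}$, then this function is equivariant under the characters attached to both $G_{\alpha,\m}$ and $G_{\beta,\n}$, hence invariant (up to sign) under the subgroup they generate in $O(n)$. When $\alpha\ne\beta$, the pinwheel rotations of differing orders combine on the first four coordinates to produce an action incompatible with either character. When $\alpha=\beta$ and $\m\ne\n$, the hypothesis $\gcd(\m,\n)=1$, or the componentwise relation $\m\lesssim\n$, forces the synchronous rotation subgroups attached to corresponding coordinate blocks to generate a strictly larger group whose only equivariant functions vanish identically. In either situation the putative common solution would be zero, contradicting its sign-changing nature.
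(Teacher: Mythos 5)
Your overall strategy---equivariant spaces, a Nehari minimization, symmetric criticality, and a concentration analysis adapted to the weighted setting---captures the broad shape of the argument, and your explanation of the exclusion $n\ne 5$ (that for $n=5$ conditions \eqref{eq:primeCondition} and \eqref{eq:primeCondition2} are jointly infeasible because $2\chi_\alpha + 2m_1$ must equal $2 = (n-1)/2$) is correct. However, the central compactness mechanism you propose is not the one in the paper, and as stated it does not quite work.

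You frame the argument as a sub-threshold compactness result: show $c_{\alpha,\m} < \kappa(\alpha,\m)\,S^\ast_{a,b}$ where $\kappa(\alpha,\m)\ge 2$ is the minimum orbit cardinality off the origin, then glue sign-alternating bubbles to verify the strict inequality, then invoke compactness to extract a minimizer. Three problems. First, the paper never produces or uses such a threshold; it works at the exact minimal level $c^\phi$ (shown to coincide with a mountain-pass level, Lemma \ref{lem:nehariProperties}) and feeds the resulting Palais--Smale sequence into the Struwe-type decomposition of \cite{Chernysh}. Either the weak limit $v_0$ is already nontrivial, in which case minimality forces $J(v_0)=c^\phi$ and $v_0$ is a solution; or $v_0=0$ and a single bubble centered at the origin absorbs all the energy $c^\phi$, in which case the bubble itself is the desired equivariant solution of the weighted problem. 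No upper estimate of the form $c^\phi < \kappa S^\ast$ is involved, and in fact no bubble-gluing test function ever appears. Second, your claim that ``\eqref{eq:primeCondition} and \eqref{eq:primeCondition2} are engineered precisely so that $\kappa(\alpha,\m)\ge 2$ on all of $\RR^n\setminus\{0\}$'' is incorrect as stated: only the conjunction of \eqref{eq:primeCondition} and \eqref{eq:primeCondition2} yields the infinite-orbit property (P3), whereas for $a<b$ the paper explicitly drops \eqref{eq:primeCondition2}, allows fixed-point axes off the origin, and relies instead on the fact that for $a<b$ the weight $\abs{x}^{-bq}$ forces all concentration to occur at the origin regardless of orbit structure (\cite[Theorem~1]{Chernysh}). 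A threshold argument requiring $\kappa\ge 2$ off the origin would therefore be needlessly restrictive and would not cover the full parameter range claimed in Theorem~\ref{thm:main}(1). Third, even where a threshold argument is plausible, the ``bubble energy'' is not a single constant $S^\ast_{a,b}$: a profile concentrating at the origin yields a weighted CKN bubble, while a profile concentrating at a nonzero fixed point (possible when \eqref{eq:primeCondition2} fails) yields an \emph{unweighted} Sobolev bubble of different energy, so comparing to a single extremal energy is not enough; the decomposition of \cite{Chernysh} is precisely what keeps track of this dichotomy.

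A smaller but important omission: the existence step in the paper is performed on a bounded $G$-invariant domain $\Om$ (the unit ball), with Lemma~\ref{lem:minimalEnergyInvariance} showing $c^\phi(\Om)=c^\phi(\RR^n)$ by scale invariance; the extracted bubble then solves the entire-space problem. Working directly on $\RR^n$, as your sketch implicitly does, requires dealing with vanishing and dilation loss of compactness, which the bounded-domain device sidesteps. Finally, on distinctness, your contradiction strategy is in the right spirit, but ``the synchronous rotation subgroups generate a strictly larger group whose only equivariant functions vanish identically'' is precisely the hard step; the paper makes this concrete via the $t$-code formalism (Definition~2.2, Lemma~\ref{lem:euclid}, Lemma~\ref{lem:isCode}) and a Euclidean-algorithm argument to upgrade the two synchronous rotation invariances to full independent rotation invariance in each complex coordinate, which is what actually forces $u\equiv 0$ (Propositions~\ref{prop:distinct}--\ref{prop:distinct2}). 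That combinatorial engine is the crux and is missing from your sketch.
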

	
	In this theorem, we are defining for $\m \ne \n$ 
	\begingroup
	\small
	\begin{equation}
		\gcd(\m,\n) := \begin{cases} 
			\max\left\{  \gcd(j+1, \ell + 1) : m_j \ne 0,\, n_\ell \ne 0, \ell\ne j  \right\} & \text{if $\m, \n \ne 0$,}\\
			1 & \text{otherwise}.
		\end{cases}
	\end{equation}
	\endgroup
	In addition, we  declare that $\m \lesssim \n$ if there exists an index $\ell$ such that $m_j = n_j$ for all $j < \ell$, $m_\ell < n_\ell$, and $m_j = 0$ for all $\ell > j$. Notice that if $\m \ne \n$ and the coordinates of $\m$ and $\n$ are non-zero only on indices $j$ for which $(j+1)$ is prime, then automatically $\gcd(\m,\n) = 1$.
	
	To the best of the author's knowledge, Theorem \ref{thm:main} is the first result giving the existence of entire nodal solutions to the Caffarelli-Kohn-Nirenberg problem \eqref{eq:limitProblem} for the whole range of parameters \eqref{eq:conditions}.
	Aside from this important existence question, the main strength of Theorem \ref{thm:main} is in its ability to prescribe various mutually exclusive symmetry types to solutions of \eqref{eq:limitProblem}. In fact, by varying $\alpha \ge 0$ for each fixed tuple $\m$, Theorem \ref{thm:main} guarantees the existence of $(\alpha,\m)$-symmetric entire nodal solutions to \eqref{eq:limitProblem} for an infinite number of distinct symmetry configurations. We wish to emphasize that, without some condition such as (i)-(ii) in this last theorem, it is impossible to distinguish the constructed solutions by their symmetries alone. We point the reader towards Remark \ref{rem:counterExample} for an instance of a non-trivial function simultaneously satisfying a $(0,(2,\mathbf{0}))$-symmetry and a $(0,(0,0,1,\mathbf{0}))$-symmetry. Notice that in this case, both (i) and (ii)  fail to apply. As for the solutions constructed in Szulkin-Waliullah \cite{szulkin2012sign}, we mention that the symmetries of these solutions are exclusive to two coordinates only.
	
	We also observe that, to the author's knowledge, Theorem \ref{thm:main} guarantees the existence of new sign-changing solutions to the critical $p$-Laplace equation \eqref{eq:unweighted} and, especially, the Yamabe problem. What is more, for fixed $\alpha \ge 0$, if we restrict ourselves only to $\m$ such that $m_j \ne 0$ if and only if $(j+1)$ is prime, then we automatically obtain solutions with incompatible symmetries by Theorem \ref{thm:main}-(ii). In the case of symmetries acting exclusively on a prime number of complex coordinates, the possible choices $S(n)$ of $\m$ satisfying \eqref{eq:primeCondition}-\eqref{eq:primeCondition2} is asymptotically in the order of 
	\[
	S(n) \sim \exp\left( \frac{\pi\sqrt{2n}}{\sqrt{3\ln(n/2)}}\right)
	\]
	by Ramanujan \cite[Page 260]{ramanujan2015collected}.
	
	Next, we wish to acknowledge the presence of an additional requirement for our construction of a non-trivial nodal solution to \eqref{eq:limitProblem} when $a=b \ne 0$. This requirement is a direct consequence of the particularities present in the bubble tree decomposition for Palais-Smale sequences established in Chernysh \cite{Chernysh} for the Caffarelli-Kohn-Nirenberg problem \eqref{eq:limitProblem}. It is the nature of this decomposition, together with the inherent lack of translation invariance for \eqref{eq:limitProblem}, which contributes this technical requirement. We note that this detail is further elaborated upon in \S1.1, where we discuss the proof aspects with greater precision. 
	
	Bringing our attention to positive solutions of \eqref{eq:limitProblem}, much can be said regarding their classification. Firstly, when $a=b=0$, all positive solutions to problem \eqref{eq:limitProblem} have been shown to be radially symmetric and an explicit form has been determined. This was demonstrated by Caffarelli, Gidas and Spruck \cite{CaffarelliGidasSpruck} in the case $p=2$,  by Damascelli, Merch\'an, Montoro and Sciunzi \cite{damascellimerchanmontorosciunzi} for $\frac{2n}{n+2}\le p < 2$, by V\'etois \cite{vetois2016} for $1<p<2$, and finally by Sciunzi \cite{SCIUNZI201612} for $2 < p < n$. We also observe that significant work has been put forth in the direction of extending this classification to positive solutions of \eqref{eq:unweighted}, without the assumption that the solutions lie in $\D^{1,p}(\RR^n)$. Indeed, we point to the papers of Catino-Monticelli-Roncoroni \cite{catinoMonticelliRoncorini}, Ou \cite{ou2025classification}, V\'etois \cite{vetois2024note} and Sun-Wang \cite{sun2025criticalquasilinearequationsriemannian}. Recently, Ciraolo-Gatti \cite{ciraolo2025classificationresultsboundedpositive} established such a classification for positive local solutions of \eqref{eq:unweighted}, under certain boundedness or growth conditions.
	
	In the weighted setting (i.e. when $(a,b) \ne (0,0)$), the classification of positive solutions to \eqref{eq:limitProblem} has been established in certain special cases. For instance, when $b \ne a = 0$,  Ghoussoub-Yuan \cite{ghoussoub-yuan} determined an explicit form for  every positive radial solution of \eqref{eq:limitProblem}. In the case $p=2$, Dolbeault-Esteban \cite{DolbeaultEsteban} provided an explicit form (demonstrating, in particular, radial symmetry) for every positive solution to \eqref{eq:limitProblem} in the non-limiting case $a<b$, provided $a$ and $b$ satisfy certain conditions. In dimensions $n\ge 3$, Ciraolo-Corso \cite{ciraolo-corso} extended this classification  to include all $1<p<n$ when $a=b \ge 0$, as well as a wider range of possible values for $a,b$ provided $p<n/2$.  Additionally, Le-Le \cite{LeLe} later showed that, when $a=0$, all positive solutions of \eqref{eq:limitProblem} are radial and therefore classified by Ghoussoub-Yuan \cite{ghoussoub-yuan}.  With regards to classifying positive local solutions of \eqref{eq:limitProblem} (e.g. positive solutions which, a priori, need not live in $\D_a^{1,p}(\RR^n,0)$), we direct the reader to the paper of Ciraolo-Polvara \cite{ciraoloPolvara}. We refer the reader to \S1.1.1 of \cite{Chernysh} for further historical context regarding the classification of solutions to \eqref{eq:limitProblem}.\newline

	\noindent\textbf{Acknowledgements}. I would like to thank my PhD advisors, Professor J\'er\^ome V\'etois and Professor Pengfei Guan, for their invaluable support and guidance throughout the development of this work. 
	
	\subsection{Notation and Proof Discussions}
	
	Throughout this paper, $G$ denotes a closed subgroup of $O(n)$. Given a point $x \in \RR^n$, we shall write $\operatorname{Orb}_G(x)$ and $\operatorname{Stab}_G(x)$ to denote the orbit of $x$ and stabilizer of $x$, respectively, under the action of $G$. A subset $\Om \subseteq \RR^n$ is said to be $G$-\emph{invariant} provided 
	\[
	\operatorname{Orb}_G(\Om) := \bigcup_{x \in \Om} \operatorname{Orb}_G(x) \subseteq \Om.
	\]
	Of course, this is equivalent to requiring that $\operatorname{Orb}_G(\Om) = \Om$. If $\Om$ is a $G$-invariant domain, a function $u : \Om \to \RR$ will be called $G$-invariant if 
	\[
	u(gx) = u(x), \quad \forall g \in G,\, x \in \Om.
	\]
	For the remainder of this paper, whenever a subgroup $G$ of $O(n)$ is present, $\Om \subseteq \RR^n$ will be assumed to be $G$-invariant containing the origin.
	
	Given a subgroup $G$ of $O(n)$, a continuous group homomorphism $\phi : G \to \{\pm 1\}$, and a $G$-invariant domain $\Om \subseteq \RR^n$, a test function $u \in C_c^\infty(\Om)$ will be called $\phi$-equivariant if
	\[
	u(gx) = \phi(g)u(x), \quad \forall g\in G,\, x \in \Om.
	\]
	We shall denote by $C_c^\infty(\Om)^\phi$ the vector space consisting of all $\phi$-equivariant functions in $C_c^\infty(\Om)$. For the proof of Theorem \ref{thm:main}, we construct suitable subgroups $G$ of $O(n)$ along with continuous group homomorphisms $\phi : G \to \{\pm 1\}$ such that the following important properties hold.
	\begin{enumerate}
		\item[(P1)]\label{P1} There exists $\xi \in \RR^n$ such that $\operatorname{Stab}_G(\xi) \subseteq \ker{\phi}$.
		\item[(P2)]\label{P2} $\phi$ is surjective.
	\end{enumerate}
	These two properties follow those employed within Clapp-Rios \cite{clappRios}. Property (P1) is crucial to the argument as it ensures that the space of $\phi$-equivariant test functions $C_c^\infty(\Om)^\phi$ is non-trivial, and hence infinite dimensional; see Bracho-Clapp-Marzantowicz \cite[page 195]{bracho2005symmetry}. \iffalse This dimensionality condition is  a requirement for the symmetric mountain pass lemma (see Willem \cite{willemFunctional, willem2012minimax}) thereby enabling the construction of a symmetric Palais-Smale sequence at the correct energy level for our analysis.\fi  For the limit case $a=b \ne 0$ in the problem parameters, we must impose the following additional constraint on the subgroup $G$:
	\begin{enumerate}
		\item[(P3)] For each $x \in \RR^n \setminus \{0\}$, the orbit $\operatorname{Orb}_G(x)$ is infinite.
	\end{enumerate}
	Condition (P3) is a subtle, and yet important, distinction from that present in Clapp-Rios \cite{clappRios}. Indeed, within this paper, the authors allow the subgroup $G$ to satisfy either $\dim(\operatorname{Orb}_G(x)) > 0$ or $\operatorname{Orb}_G(x) = \{x\}$, for every $x \in \RR^n$. However, within the weighted case with $a=b \ne 0$, it is crucial that we ensure the orbit $\operatorname{Orb}_G(x)$ be infinite for $x \ne 0$. This is because, by the Struwe-type decomposition for weighted critical $p$-Laplace equations from \cite{Chernysh}, solutions to \eqref{eq:limitProblem} when $a=b \ne 0$ are produced precisely when concentration occurs at the origin at a sufficient rate (see \cite[Theorem 2]{Chernysh}). Consequently, in order to ensure that our process yields solutions to the weighted problem in $\RR^n$, we must enforce concentration at the origin by requiring that the origin be the only point fixed by the action of $G$. Without the strengthened condition (P3), there is the possibility that our procedure produces solutions to an unweighted problem, either in $\RR^n$ or a half-space.  In sharp contrast, however, when $a < b$, all concentration must automatically take place at the origin (see \cite[Theorem 1]{Chernysh}) which renders condition (P3) superfluous, thereby allowing us to construct more solutions.

	The remainder of this paper is delegated to the proof of Theorem \ref{thm:main}. In the next section, we construct suitable subgroups $G$ of $O(n)$ along with associated homomorphisms $ G \to \{\pm 1\}$. Our construction generalizes that of Clapp-Rios \cite{clappRios} and Clapp-Vicente-Ben\'itez \cite{clapp2026entire}, most notably by considering combinations of group actions on complex tuples whose length is any integer larger than $1$, rather than only actions on pairs of complex coordinates.   Following this, we prove that solutions may be distinguished by their symmetry-types, i.e. we establish the second part of Theorem \ref{thm:main}. To achieve this, we utilize a binary coding scheme to encode the coordinates with respect to which our equivariant solutions are rotationally invariant in order to deduce that these solutions satisfy mutually exclusive symmetry types.
	
	Afterwards, in \S3, we show that symmetric Palais-Smale sequences may be constructed at suitable energy levels which we shall exploit to demonstrate the existence of sign-changing solutions. Using these sequences, we employ a modified symmetrized analogue of the Struwe-type decomposition for weighted critical $p$-Laplace equations to extract a non-trivial solution to a symmetrized version of \eqref{eq:limitProblem}. We then show that this solution also solves the problem \eqref{eq:limitProblem} itself. 
	
	\section{Constructing Admissible Symmetry Groups}
			
	In this section, we construct the subgroups of $O(n)$ needed to produce the symmetric Palais-Smale sequences required within \S3-4 of this paper. As stated in the introduction, our groups generalize those obtained by Clapp-Rios \cite{clappRios} and Clapp-Vicente-Ben\'itez \cite{clapp2026entire} by incorporating actions involving non-trivial tuples of complex coordinates of arbitrary length.
	
	Henceforth, we fix a dimension $n \ge 4$ and put $k = k(n) := \left\lfloor n/2\right\rfloor - 1$.
	
	\subsection{Building Groups of Prescribed Symmetry}
	
	For each integer $j\ge 1$, we define an $\RR$-linear isometry of $\RR^{2(j+1)} \cong \CC^{j+1}$ given by 
	\begin{equation}
		\rho_{j} : \CC^{j+1} \to \CC^{j+1}, \quad (z_1,\dots, z_{j+1}) \mapsto (-\overline{z_{j+1}}, \overline{z_1}, \dots, \overline{z_{j}}).
	\end{equation}
	Clearly, $\rho_j^\ell$ contains a non-trivial permutation of the coordinates $(z_1,\dots, z_{j+1})$ unless $\ell \equiv 0 \mod{(j+1)}$. Furthermore, when $j+1$ is even, there holds for $z \in \CC^{j+1}$,
	\[
	\rho^{j+1}_{j}(z) = -z \quad \text{and} \quad \rho_j^{2(j+1)}(z) = z
	\]
	On the other hand, it is easily seen that if $j+1$ is odd:
	\[
	\rho^{j+1}_j(z_1,\dots,z_{j+1}) = (-\bar{z_1}, -\bar{z_2},\dots, - \overline{z_{j+1}}) \quad \text{and} \quad \rho^{2(j+1)}_j(z) = z.
	\]
	Regardless, $\rho_j$ has order $2(j+1)$ for each $j \ge 1$. Additionally, we allow rotations to act upon the coordinates of $\CC^{j+1}$ in the synchronous coordinate-wise manner:
	\[
	e^{i\theta}(z_1,\dots,z_{j+1}) := (e^{i\theta} z_1,\dots,e^{i\theta}z_{j+1}),
	\]
	for $\theta \in [0,2\pi)$. Define $\Gamma_j$ to be the subgroup of $O(2(j+1))$, acting upon the coordinates of $\RR^{2(j+1)} \cong \CC^{j+1}$, generated by these rotations and the map $\rho_j$. We emphasize that, when $j=1$, this idea comes from Clapp \cite{Clapp_pure} and Clapp-Rios \cite{clappRios}.
	
	An easy calculation shows that, for each $\theta \in [0,2\pi)$,
	\begin{align*}
		e^{i\theta} \rho_j(z_1,\dots,z_{j+1}) = e^{i{\theta}} (-\overline{z_{j+1}}, \bar{z_1},\dots, \overline{z_{j}}) &= (-e^{i\theta}\overline{z_{j+1}},e^{i\theta} \bar{z_1},\dots, e^{i\theta}\overline{z_{jj}}) \\
		&= (-\overline{e^{-i\theta} z_{j+1}}, \overline{e^{-i\theta} z_1},\dots, \overline{e^{-i\theta} z_{j}})\\
		&= \rho_j e^{i(-\theta)}(z_1,\dots,z_{j+1})\\
		&= \rho_j e^{i(2\pi - \theta)}(z_1,\dots,z_{j+1}).
	\end{align*}
	Consequently, rotations permute with $\rho_j$ up to a change in angle in the sense that $e^{i\theta} \rho_j = \rho_j e^{i(2\pi - \theta)}$. In the event that $j+1$ is even, we have $\rho_j^{j+1}(z) = -z$ whence $\rho_j^{j+1} = e^{i\pi}$ is a rotation. However, for all $(j+1)$-odd, no composition of $\rho_j$ can be realized as a rotation (except the identity). It follows from these observations that $\Gamma_j$ consists precisely of those linear isometries $g \in O(2(j+1))$ uniquely expressible in the form
	\begin{equation}\label{eq:uniqueExpression}
		\begin{cases}
			g = \rho_j^{\epsilon} e^{i\theta}, \quad \epsilon \in \{0,1, \dots, j\}, \, \theta \in [0,2\pi) & \text{if $j+1$ is even},\\
			g = \rho_j^{\epsilon} e^{i\theta}, \quad \epsilon \in \{0,1,\dots, 2j+1\},\, \theta \in [0,2\pi) & \text{if $j +1$ is odd}.
		\end{cases}
	\end{equation}
	This representation allows us to define a mapping
	\[
	\phi_j : \Gamma_j \to \{\pm 1\}, \quad g \mapsto (-1)^\epsilon,
	\]
	where $g$ is expressed uniquely as in \eqref{eq:uniqueExpression}; clearly, this is a group homomorphism. Moreover, $\phi_j$ is continuous being that it is a homomorphism of topological groups that is continuous at the identity. Indeed, consider $\eta_j := (1+i, 1, 0\dots, 0) \in \CC^{j+1}$ and write $g \in \Gamma_j$ according to \eqref{eq:uniqueExpression}. If $(j+1)$ is even and $\epsilon>0$, then $\abs{g\eta_j - \eta_j} \ge 1$ implying that any $g$ sufficiently close to the identity belongs to the kernel of $\phi_j$. Similarly, when $(j+1)$ is odd, we clearly have $\abs{g\eta_j - \eta_j} \ge 1$ unless $\epsilon = 0$ or $\epsilon = j+1$. When $\epsilon = j+1$, direct computation shows that \[\abs{g\eta_j - \eta_j}^2 = 6-4\sin(\theta)+2\cos(\theta) \ge 6 - 2\sqrt{5}.\] Thus, again in this case, any $g \in \Gamma_j$ sufficiently close to the identity lies in the kernel of $\phi_j$. This demonstrates the continuity of $\phi_j$ on $\Gamma_j$.

	\subsubsection{Incorporating Pinwheel-type Symmetries}\label{section:genInfinity}
	
	In the following, we borrow ideas from Clapp-Vicente-Ben\'itez \cite{clapp2026entire} (see also Clapp-Faya-Saldan\~a \cite{Clapp_2024}, and Clapp-Salda\~na-Soares-Ben\'itez \cite{clapp2025optimal})
	in order to produce solutions satisfying a ``pinwheel-type" symmetry within the weighted case. We remark that whether or not we incorporate such a symmetry is encoded by the value of $\alpha$ from Theorem \ref{thm:main}. More precisely, $\alpha$ will determines the number of ``pins'' within the symmetry class of our solution. 
	
	As in the paper of Clapp-Vicente-Ben\'itez \cite{clapp2026entire}, we begin by considering an action on $\CC^2 \cong \RR^4$. We allow the unit circle to act upon $\CC^2$ via asynchronous rotation:
	\[
	e^{i\theta}(z_1,z_2) := (e^{i\theta}z_1, e^{-i\theta}z_2).
	\]
	Next, given a non-negative integer $\alpha$ and $\beta \in \NN$, we adopt the transformation 
	\[
	\varrho_\alpha^\beta : \CC^2 \to \CC^2, \quad	
	z \mapsto \cos\left( \frac{\beta\pi}{2^{\alpha+1}} \right)z + \sin\left( \frac{\beta\pi}{2^{\alpha+1}} \right)\rho_1(z).
	\]
	For convenience, we write $\varrho_\alpha := \varrho_\alpha^1$. As illustrated in Clapp-Vicente-Ben\'itez \cite{clapp2026entire}, $\varrho_\alpha$ commutes with these rotations and, moreover, $\varrho_\alpha$ has order $2^{\alpha+2}$. In fact, there holds $\varrho_\alpha^{\beta}\varrho_\alpha^{\beta^\prime} = \varrho_\alpha^{\beta+\beta^\prime}$. If $\alpha = 0$, let
	\[
	\Upsilon_\alpha = \{I\}
	\]
	where $I$ denotes the identity of $O(n)$. Otherwise, let $\Upsilon_\alpha$ denote the copy in $O(n)$ of the $O(4)$-subgroup generated by $\varrho_\alpha$ and these rotations.  Next, by the relations above, the mapping
	\[
	\varphi_\alpha : \Upsilon_\alpha \to \{ \pm 1 \}
	\]
	induced by the rules $\varphi_\alpha(e^{i\theta}) := 1$ and $\varphi_\alpha(\varrho_\alpha) := -1$ is a well defined group homomorphism. If $\alpha = 0$, we let $\varphi_\alpha$ be the trivial homomorphism. Notice (see, for instance, Clapp-Vicente-Ben\'itez \cite{clapp2026entire}) that $\Upsilon_\alpha$ is compact in $O(n)$ and $\varphi_\alpha$ is continuous. Furthermore, it is easy to see that the point $(1,0) \in \CC^2$ satisfies the condition $\operatorname{Stab}_{\Upsilon_\alpha}(1,0) \subseteq \ker{\varphi_\alpha}$.
	
	\subsection{Group Construction}
	
	Fix now a dimension $n \ge 4$ and a non-negative integer $\alpha$. Suppose we are given a tuple $\m := (m_1,\dots, m_k)$ of non-negative integers and define the following conditions:
	
	Assume henceforth that \eqref{eq:primeCondition} is satisfied.
	For each index $j = 1,\dots, k$, we denote by $\Gamma_{j,\ell}$, with $\ell = 1,\dots, m_j$, the copy of $\Gamma_j$ in $O(n)$ acting upon the coordinates of $\RR^n$ indexed by
	\begin{align*}
		\begin{cases}
			\sum_{\iota=1}^{j-1} 2m_\iota (\iota+1) + 2(\ell-1)(j+1) + 5 & \text{if $\alpha > 0$,}\\
			\sum_{\iota=1}^{j-1} 2m_\iota (\iota+1) + 2(\ell-1)(j+1) + 1 & \text{if $\alpha = 0$,}
		\end{cases}
	\end{align*}
	through
	\begin{align*}
		\begin{cases}
			\sum_{\iota=1}^{j-1} 2m_\iota (\iota+1) + 2\ell (j+1) +4 & \text{if $\alpha > 0$,}\\
			\sum_{\iota=1}^{j-1} 2m_\iota (\iota+1) + 2\ell (j+1) & \text{if $\alpha = 0$.}
		\end{cases}
	\end{align*}
	Since $\Gamma_{j,\ell}$ is an isomorphic copy of $\Gamma_j$, we may abuse notation and view $\phi_j$ as a group homomorphism $\Gamma_{j,\ell} \to \{\pm 1\}$ in the natural way. 
	
	According to this scheme, given any non-negative integer $\alpha$ and $\m$ such that $(\alpha,\m)$ satisfies \eqref{eq:primeCondition}, we define a subgroup $G_{\alpha,\m}$ of $O(n)$ by putting 
	\begin{equation}\label{eq:primeGroupDef}
		G_{\alpha, \m} := \Upsilon_\alpha\Lambda_{\alpha,\m}\prod_{j=1}^k \prod_{\ell=1}^{m_j} \Gamma_{j,\ell},
	\end{equation}
	where $\Lambda_{\alpha,\m}$ is a copy of the orthogonal group:
	\[
	\Lambda_{\alpha,\m} \cong \begin{cases}
		O\left( n - 4 - 2\sum_{j=1}^k m_j(j+1)\right) & \text{if $\alpha > 0$ and \eqref{eq:primeCondition2} holds},\\
		O\left( n -  2\sum_{j=1}^k m_j(j+1)\right) & \text{if $\alpha = 0$ and \eqref{eq:primeCondition2} holds},\\
		\{I\} & \text{if \eqref{eq:primeCondition2} fails}
	\end{cases}
	\] acting on the remaining coordinates of $\RR^n$, untouched by all of the $\Gamma_{j,\ell}$ and $\Upsilon_\alpha$. Observe that $G_{\alpha,\m}$ is indeed a well defined subgroup of $O(n)$ since it is the product of subgroups of $O(n)$, with each subgroup commuting with the others.
	
	It is apparent from the definition in \eqref{eq:primeGroupDef} that each $g \in G_{\alpha,\m}$ may be uniquely expressed as a product of commutative factors belonging to the respective subgroups appearing in the definition of $G_{\alpha,\m}$, in the sense that
	\begin{equation}\label{eq:elementRepresentation}
		g = \upsilon_\alpha g_0 \prod_{j=1}^k \prod_{\ell=1}^{m_j} \gamma_{j,\ell}, \quad \upsilon_\alpha \in \Upsilon_\alpha,\,g_0 \in \Lambda_{\alpha,\m},\, \gamma_{j,\ell} \in \Gamma_{j,\ell}.
	\end{equation}
	Consequently, we define a group homomorphism $\phi_{\alpha,\m} : G_{\alpha,\m} \to \{\pm 1\}$ by allowing each $\phi_j$ to act on the factors of $g$ belonging to the copies of $\Gamma_j$, i.e. 
	\begin{align*}
		\phi_{\alpha,\m}(g) :=  \varphi_\alpha(\upsilon_\alpha)\prod_{j=1}^k \prod_{\ell=1}^{m_j} \phi_j(\gamma_{j,\ell}),
	\end{align*}
	where $g$ is expressed as in \eqref{eq:elementRepresentation}.
	
	Before we proceed with some essential lemmas regarding the properties of the pairs $(G_{\alpha,\m}, \phi_{\alpha,\m})$ constructed above, let us formally encode some notation which is utilized in the statement Theorem \ref{thm:main}.
	
	\begin{defn}\label{def:Sn}
		Maintaining the aforedescribed notation, a function $u : \RR^n \to \RR$ will be said to be $(\alpha,\m)$-symmetric if it is $\phi_{\alpha,\m}$-equivariant.
	\end{defn}
	
	Roughly speaking, an $\m$-type symmetry describes in which ways the solution $u$, with respect to the remaining coordinates not affected by $\Upsilon_\alpha$, may be synchronously rotated, reflected, and coordinate swapped in relation to the function's sign. Furthermore, as we shall later demonstrate, these indeed give rise to distinct and incompatible symmetries, in contexts where the assumptions stated in the second part of Theorem \ref{thm:main} hold.
	
	\begin{rem}\label{rem:counterExample}
		It is natural to ask whether one must impose some condition on $\m$ and $\n$, as in Theorem \ref{thm:main}-(ii), to distinguish the solutions when $\alpha = \beta$.  Without any additional condition, there is no way, in general, to distinguish these solutions using the symmetries alone. For example, the mapping
		\[
		\RR^4 \cong \CC^2 \to \RR, \quad z \mapsto \Im\left( z_1\overline{z_2}z_3\overline{z_4} \right)
		\]
		is an example of a non-trivial function which is both $(0,(2,\mathbf{0}))$-symmetric and $(0,(0,0,1,\mathbf{0}))$-symmetric. Consequently, if we allow for actions on a composite number of complex coordinates, there is no way, in general, to distinguish the resulting symmetry types. In addition, to show that our imposed symmetries are mutually exclusive, we utilize a binary coding scheme together with a $\gcd$-argument requiring that we compare symmetries on a coprime number coordinates. We refer the reader to Lemmas \ref{lem:euclid} and \ref{lem:isCode}, as well as Proposition \ref{prop:distinct} for the concrete arguments.
	\end{rem}
	
	Next, we show that the group and homomorphism pairs constructed above satisfy (P1)-(P2) whenever condition \eqref{eq:primeCondition} holds. In addition, we verify that \eqref{eq:primeCondition2} implies property (P3). 
	
	\begin{lem}\label{lem:primeConstruct}
		Let $n \ge 4$, $\alpha \ge 0$ an integer, and $\m := (m_1,\dots, m_k)$ be a tuple of non-negative integers such that $(\alpha, \m)$ satisfies \eqref{eq:primeCondition}. Then, the group $G_{\alpha,\m}$ defined in \eqref{eq:primeGroupDef} is a closed subgroup of $O(n)$, and $(G_{\alpha,\m}, \phi_{\alpha,\m})$ satisfies conditions (P1)-(P2). Furthermore, if $(\alpha,\m)$ satisfies \eqref{eq:primeCondition2}, then (P3) holds true.
	\end{lem}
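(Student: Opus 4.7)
My plan is to verify each claim by exploiting the block structure of $G_{\alpha,\m}$: the subgroups $\Upsilon_\alpha$, $\Lambda_{\alpha,\m}$, and each $\Gamma_{j,\ell}$ act on pairwise disjoint coordinate subsets of $\RR^n$, so they commute in $O(n)$, and both elements and stabilizers factor blockwise. Closedness is then immediate: $\Upsilon_\alpha$ is compact by Clapp--Vicente--Ben\'itez \cite{clapp2026entire}, each $\Gamma_{j,\ell}$ is a finite union of circles in view of the unique expression \eqref{eq:uniqueExpression}, and $\Lambda_{\alpha,\m}$ is a closed orthogonal group; the product of pairwise commuting compact subgroups is compact, hence closed.

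For surjectivity (P2), the identity yields $+1$. To obtain $-1$: when $\alpha > 0$, take $g = \varrho_\alpha \in \Upsilon_\alpha$ with all other factors trivial, so $\phi_{\alpha,\m}(g) = \varphi_\alpha(\varrho_\alpha) = -1$. When $\alpha = 0$, the hypothesis forces $\m \ne \mathbf{0}$, so I choose any $(j,\ell)$ with $m_j \ne 0$ and take $g = \rho_j$ inside $\Gamma_{j,\ell}$; then $\phi_j(\rho_j) = -1$ and all other factors contribute $+1$.

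For (P1), I build $\xi \in \RR^n$ block-by-block: set $\xi = (1,0) \in \CC^2$ on the $\Upsilon_\alpha$-block when $\alpha > 0$ (using the already observed fact $\operatorname{Stab}_{\Upsilon_\alpha}(1,0) \subseteq \ker \varphi_\alpha$), set $\xi = \eta_j = (1+i,1,0,\dots,0)$ on each $\Gamma_{j,\ell}$-block, and choose any point in the $\Lambda_{\alpha,\m}$-block. The continuity estimates already carried out in the construction of $\phi_j$ in fact prove more: they force $\operatorname{Stab}_{\Gamma_j}(\eta_j) = \{I\}$, because any stabilizing $g = \rho_j^{\epsilon} e^{i\theta}$ must have $\epsilon = 0$, after which the equation $e^{i\theta}\eta_j = \eta_j$ gives $\theta = 0$. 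Since $\phi_{\alpha,\m}$ is identically $+1$ on $\Lambda_{\alpha,\m}$ (the $\Lambda$-factor does not appear in the defining product for $\phi_{\alpha,\m}$), and since the disjoint block actions decouple $\operatorname{Stab}_{G_{\alpha,\m}}(\xi)$ as the product of the blockwise stabilizers, every stabilizing $g$ lies in $\ker \phi_{\alpha,\m}$.

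For (P3), assume \eqref{eq:primeCondition2} and let $x \ne 0$. If $x$ has nonzero projection onto the $\Upsilon_\alpha$-block or onto some $\Gamma_{j,\ell}$-block, then the continuous one-parameter subgroup $\{e^{i\theta}\}$ inside that factor traces out a circle, so $\operatorname{Orb}_{G_{\alpha,\m}}(x)$ is infinite. Otherwise $x$ is supported entirely inside the $\Lambda_{\alpha,\m}$-block, of dimension $N := n - 4\chi_\alpha - 2\sum_{j=1}^k m_j(j+1)$. Condition \eqref{eq:primeCondition} gives $N \ge 0$, while \eqref{eq:primeCondition2} rewrites precisely to $N \ne 1$; combined with $x \ne 0$ in this block this forces $N \ge 2$, and then $O(N)$ acts transitively on the $(N-1)$-sphere, yielding an infinite orbit. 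The main (rather mild) obstacle is the combinatorial bookkeeping linking the arithmetic inequalities \eqref{eq:primeCondition}--\eqref{eq:primeCondition2} to the geometric criterion $N \ne 1$, together with careful reuse of the delicate estimate $|\rho_j^{j+1}e^{i\theta}\eta_j - \eta_j|^2 = 6 - 4\sin\theta + 2\cos\theta \ge 6 - 2\sqrt{5} > 0$ required to trivialize the stabilizer of $\eta_j$ in the odd case $j+1$ odd.
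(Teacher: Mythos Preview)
Your proposal is correct and follows essentially the same block-decomposition approach as the paper: closedness via compactness of the factors, (P2) by construction, (P1) via a blockwise choice of $\xi$, and (P3) via the orbit product formula together with the arithmetic translation of \eqref{eq:primeCondition2} into $N \ne 1$. The only notable variation is in (P1): the paper uses $\xi_j = (1+i,1,\dots,1)$ on each $\Gamma_{j,\ell}$-block and argues directly that any stabilizing element must have exponent $\epsilon_{j,\ell}=0$, whereas you take $\eta_j = (1+i,1,0,\dots,0)$ and recycle the earlier continuity estimates to obtain the slightly stronger conclusion $\operatorname{Stab}_{\Gamma_j}(\eta_j)=\{I\}$---a neat economy, and both choices work.
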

	
		\begin{proof}
		We only explicitly treat the case $\alpha > 0$, since a direct adaptation of this argument applies in the special case $\alpha = 0$ (see also the end of this proof for comments on how the proof is adapted in this case).
		
		First we demonstrate that $G_{\alpha,\m}$ is a closed subgroup of $O(n)$. Indeed, owing to the fact that each $\rho_j$ has finite order, and the set of actions by rotation form a continuous copy of the unit circle in $O(2(j+1))$, it is easy to see that $\Gamma_j$ is compact in $O(2(j+1))$. It follows that each $\Gamma_{j,\ell}$ is compact in $O(n)$. By Tychonoff's theorem and continuity of the group operation on $O(n)$, we infer that $\Upsilon_\alpha\prod_{j=1}^k \prod_{\ell=1}^{m_j} \Gamma_{j,\ell}$ is compact in $O(n)$. Then, since $\Lambda_{\alpha,\m}$ is closed in $O(n)$, it follows from standard theory on topological groups (see Hewitt-Ross \cite{hewittRoss}) that $G_{\alpha,\m}$ is closed in $O(n)$, which verifies our first assertion.

		By construction, $\phi_{\alpha,\m}$ is surjective and hence criterion (P2) holds true. In addition, it is easy to see that $\phi_{\alpha,\m}$ is continuous on $G_{\alpha,\m}$ since, by our discussions in \S2.1-\S2.1.1, $\varphi_\alpha$ and each $\phi_j$ are continuous. To see that the property (P1) holds, let $\mathbf{e} = (1,0) \in \CC^2$ and define for each index $j=1,\dots,k$ the point $\xi_{j} = (1+i,1,\dots,1) \in \CC^{j+1}$; consider 
		\begingroup
		\small
		\[
		\xi = (\mathbf{e}, \xi_{1}, \dots, \xi_{1}, \dots, \xi_{k}, \dots, \xi_{k}, w) \in \CC^2 \times\left(\prod_{j=1}^k\CC^{m_j(j+1)}\right)\times \RR^{n-4-2\sum_{j=1}^k m_j(j+1)}
		\]
		\endgroup
		with $w \in \RR^{n-4-2\sum_{j=1}^k m_j (j+1)}$ arbitrary. Above, we emphasize that each $\xi_{j}$ appears exactly $m_j$-times in the tuple defining $\xi$. We claim that $\operatorname{Stab}_{G_{\alpha,\m}}(\xi) \subseteq \ker{\phi_{\alpha,\m}}$. Certainly, express any  $g \in \operatorname{Stab}_{G_{\alpha,\m}}(\xi) $ according to \eqref{eq:elementRepresentation} and observe that $g \xi = \xi$ forces $\upsilon_\alpha \mathbf{e} = \mathbf{e}$ and $\gamma_{j,\ell} \xi_{j} = \xi_{j}$ for all $j  = 1,\dots, k$ and $\ell=1,\dots, m_j$. By our comments at the end of \S\ref{section:genInfinity}, we have $\upsilon_\alpha \in \ker{\varphi_\alpha}$. For each admissible pair $j,\ell$, write $\gamma_{j,\ell}$ in the form of \eqref{eq:uniqueExpression}. That is, $\gamma_{j,\ell} = \rho_{j,\ell}^{\epsilon_{j,\ell}} e^{i\theta_{j,\ell}}$ with $\epsilon_{j,\ell} \in \{0,1, \dots, j\}$ if $j+1$ is even and $\epsilon_{j,\ell} \in \{0,1,\dots,2j+1\}$ otherwise. Here, $\rho_{j,\ell}$ denotes the copy of $\rho_j$ acting upon the coordinates treated by $\Gamma_{j,\ell} \cong \Gamma_j$. Note that, because $\rho_{j,\ell}^{\epsilon_{j,\ell}}$ shuffles coordinates unless $\epsilon_{j,\ell} \equiv 0 \mod{(j+1)}$, we must have either $\epsilon_{j,\ell} = 0$ or $\epsilon_{j,\ell} = j+1$. If $j+1$ is even, the only possibility is $\epsilon_{j,\ell} = 0$. If $j+1$ is odd, then in the event that $\epsilon_{j,\ell} = j+1$, the equality $\rho_{j,\ell}^{\epsilon_{j,\ell}} e^{i\theta_{j,\ell}} \xi_{j} = \xi_{j}$ reduces to 
		\[
		\left( -e^{-i\theta_{j,\ell}} (1-i), -e^{-i\theta_{j,\ell}},-e^{-i\theta_{j,\ell}},\dots,-e^{-i\theta_{j,\ell}} \right) = (1+i,1,1,\dots,1)
		\]
		which admits no possible solutions. Consequently, each $\epsilon_{j,\ell} =0$ whence $g \in \ker{\phi_{\alpha,\m}}$ by construction. This verifies (P1). 
		
		It remains to establish (P3) when $(\alpha,\m)$ satisfies the additional condition \eqref{eq:primeCondition2}. To this end, notice that for any 
		\begingroup
		\smaller
		\[x = (z, \zeta_{1,1},\dots,\zeta_{1,m_1}, \dots, \zeta_{k,1}, \dots, \zeta_{k,m_k}, w)
		\in \CC^2 \times\left(\prod_{j=1}^k\CC^{m_j(j+1)}\right)\times \RR^{n-4-2\sum_{j=1}^k m_j(j+1)} 
		\]
		\endgroup
		there holds
		\begin{equation}\label{eq:orbitForm}
			\operatorname{Orb}_{G_{\alpha,\m}}(x) = \Upsilon_\alpha z \times \left(\prod_{j=1}^k \prod_{\ell=1}^{m_j} \Gamma_{j,\ell} \zeta_{j,\ell}\right) \times \Lambda_{\alpha,\m} w.
		\end{equation}
		Thus, so long as $x \ne 0$, the cardinality of $\operatorname{Orb}_G(x)$ shall be infinite because condition \eqref{eq:primeCondition2} ensures that $\Lambda_{\alpha,\m}$ is infinite in the case $n \ne 4 + 2\sum_{j=1}^k m_j(j+1)$. This demonstrates (P3) and concludes the proof.
		
		In the case $\alpha =0$, we proceed in the same way as above. Indeed, by the same reasoning as before, for $\alpha =0$ and $\m$ satisfying \eqref{eq:primeCondition}, the group $G_{\alpha,\m}$ given by  \eqref{eq:primeGroupDef} defines a closed subgroup of $O(n)$. The corresponding homomorphism is obviously surjective by construction. To verify the stabilizer condition (P1), we use the same argument as in the case $\alpha = 0$, but omitting the point $\mathbf{e} \in \CC^2$ and only including the required copies of $\xi_j \in \CC^{j+1}$ within the definition of $\xi$. If $(\alpha,\m)$ satisfies \eqref{eq:primeCondition}-\eqref{eq:primeCondition2} with $\alpha=0$, then the orbit of a point $x = (\zeta_{1,1},\dots,\zeta_{1,m_1}, \dots, \zeta_{k,1}, \dots, \zeta_{k,m_k}, w)$ is given by
		\[ 	\operatorname{Orb}_{G_{\alpha,\m}}(x) = \left(\prod_{j=1}^k \prod_{\ell=1}^{m_j} \Gamma_{j,\ell} \zeta_{j,\ell}\right) \times \Lambda_{\alpha,\m} w
		\]
		and hence (P3) holds true by the same logic as for the case $\alpha > 0$.
	\end{proof}
	
	\subsection{The Distinguishing of Symmetry Configurations}
	We now turn towards the task of investigating when the symmetry types $(\alpha,\m)$ satisfying \eqref{eq:primeCondition} are incompatible. To achieve this, we leverage a structure which we call a $t$-code.
	
	For any positive integer $t$, given a binary vector $c := (c_1,\dots,c_t) \in \ZZ_2^t$, we shall write $\tilde{c}$ to denote the \emph{cycle} of $c$ given by
	\[
	\tilde{c} := (c_t, c_1,\dots,c_{t-1}).
	\]
	
	\begin{defn}
		A $t$-code is a subset $\C$ of the vector space $\ZZ_2^t$ such that
		\begin{enumerate}		
			\item If $c, c^\prime\in C$ with $c\le c^\prime$ (meaning $c_j \le c_j^\prime$ for all $j=1, \dots, t$) then $c+c^\prime \in \C$. 
			\item $\tilde{c} \in \C$ for each $c \in \C$.
		\end{enumerate} 
		An element $c \in \C$ is called a \emph{codeword} (of $\C$).
	\end{defn}
	
	Given an arbitrary integer $0\le r \le t$, it will be convenient to associate it with the codeword
	\[
	v_r := (1,1,\dots,1,0,0,\dots,0)\in \ZZ_2^t,
	\]
	whose first $r$-coordinates are $1$ and the remaining $(t-r)$-coordinates are $0$. Note that $v_0$ is simply the zero codeword while $v_t$ is the codeword consisting entirely of ones.

	Before presenting the main lemma of this subsection, we make a simple observation. If $r \leq s \le t$ are non-negative integers and $v_r, v_s \in \C$ where $\C$ is a $t$-code, then $v_{s-r} \in \C$. Indeed, if $v_r, v_s\in \C$ then $v_r+v_s\in \C$ and repeatedly cycling $v_r+v_s$ yields $v_{s-r} \in \C$.

	\begin{lem}\label{lem:euclid}
		Let $r<s\le t$ be positive integers such that $\gcd(r,s) = 1$. Suppose that $\C$ is a $t$-code containing both $v_r$ and $v_s$. Then, $\C$ contains the standard basis of $\ZZ_2^t$.
	\end{lem}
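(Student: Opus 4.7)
The plan is to mimic the Euclidean algorithm internally to the code $\C$, extracting $v_1$ as a ``greatest common divisor'' of $v_r$ and $v_s$, and then using the cycling property to recover the full standard basis from $v_1$.

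Concretely, I would introduce the set
\[
S := \{ k \in \{1, \ldots, t\} : v_k \in \C \},
\]
which by hypothesis contains both $r$ and $s$. The observation stated just before the lemma shows that $S$ is closed under ``subtraction of a smaller element from a larger one'': if $a < b$ are in $S$, then since $v_a \le v_b$ we have $v_a + v_b \in \C$, and this vector has its $1$'s precisely in positions $a+1, \ldots, b$; cycling it (property (2) of a $t$-code) $t - a$ times yields $v_{b-a}$, so $b-a \in S$.

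Next I would let $d := \min S$ and show by the standard subtractive argument that $d$ divides every element of $S$. Indeed, given $m \in S$, write $m = qd + r_0$ with $0 \le r_0 < d$; iterating the subtraction operation produces $m, m-d, m-2d, \ldots, r_0 = m - qd$ inside $S$ (each step valid because the remaining value is still positive and larger than $d$ until the final step), and if $r_0> 0$ this would contradict the minimality of $d$. Hence $r_0 = 0$ and $d \mid m$. Applying this with $m = r$ and $m = s$ gives $d \mid \gcd(r, s) = 1$, so $d = 1$ and therefore $v_1 \in \C$.

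Finally, note that $v_1 = e_1$ is the first standard basis vector of $\ZZ_2^t$, and iterated cycling $\tilde{v_1}, \tilde{\tilde{v_1}}, \ldots$ produces $e_2, e_3, \ldots, e_t$ in turn, all of which lie in $\C$ by property (2). This gives the entire standard basis of $\ZZ_2^t$, completing the argument. The only mildly delicate step is the divisibility claim for $d = \min S$, but this is the routine justification that the subtractive Euclidean algorithm terminates at $\gcd$, so I do not anticipate real difficulty.
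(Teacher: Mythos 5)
Your proof is correct and follows essentially the same route as the paper: both reduce the problem to showing $v_1\in\C$ via the subtraction closure noted before the lemma, and then cycle $v_1$ to recover the whole standard basis. You package the Euclidean step as ``$d := \min S$ divides everything in $S$, so $d\mid\gcd(r,s)=1$,'' whereas the paper runs the Euclidean algorithm directly on $r,s$ step by step, but the underlying mechanism is identical.
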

	\begin{proof}
		Since $\C$ is closed under cycling, it suffices to show that $v_1 = (1, 0, \dots, 0) \in \C$. If $r = 1$ there is nothing to show. Otherwise, we write
		\[
		s = k_1 r + r_1
		\]
		for a non-negative integer $k_1$ and an integer remainder $0< r_1 < r$. Then, since $v_r, v_s \in \C$ we see that $v_{s-k_1r} = v_{r_1} \in \C$. If $r_1 = 1$ then we are done. Otherwise, we continue and write
		\[
		r = k_2r_1 + r_2
		\]
		for a non-negative integer $k_2$ and an integer remainder $0< r_2 < r_1$. Since $v_r\in \C$ and $v_{r_1} \in \C$, we also have that $v_{r-k_2r_1} = v_{r_2} \in \C$. Again, if $r_2 = 1$ then we are done. Otherwise, continuing  this process, since $\gcd(r,s) = 1$ the Euclidean algorithm guarantees that, eventually, we have a remainder of $1$. So $v_1\in \C$ and we are done.
	\end{proof}
	
	\subsection{Rotation Codewords}
	
	Keeping with the notation presented in this section, let $\alpha \ge 0$ be an integer and $\m$ a tuple such that $(\alpha,\m)$ satisfies \eqref{eq:primeCondition}. Let $1 \le j \le k$ be such that $m_j > 0$. Fix an index $\ell \in \{1,\dots,m_j\}$ and denote by $(z_1, z_2, \dots, z_{j+1})$ the (complex) coordinates the group $\Gamma_{j,\ell} \subseteq G_{\alpha,\m}$ acts upon. Then, any $x\in \mathbb{R}^n$ may be expressed as $x = (w_1, z_1, \dots, z_{j+1}, w_2)$, where $w_1$ and $w_2$ are real tuples containing, respectively, the coordinates of $x$ before and after $(z_1, \dots, z_{j+1})$.
	
	Given a binary codeword $c = (c_1, \dots, c_{j+1}) \in \ZZ_2^{j+1}$ of length $(j+1)$, we define a rotation operator 
	\[
	R^{\alpha,\m, j,\ell}_{c}(\theta) : \RR^n \to \RR^n, \;\; (w_1, z_1, \dots, z_{j+1}, w_2) \mapsto (w_1, e^{c_1i\theta}z_1, \dots, e^{c_{j+1}i\theta}z_{j+1}, w_2).
	\]
	The key observation is that the set of all $c \in \ZZ_2^{j+1}$ such that $u$ is invariant under $R^{\alpha,\m, j,\ell}_{c}(\theta)$ is itself a $(j+1)$-code.
	\begin{lem}\label{lem:isCode}
		Let $n \ge 4$ and $(\alpha,\m)$ be a pair for which \eqref{eq:primeCondition} holds. Suppose $1 \le j \le k$ is an index such that $m_j >0$ and fix $\ell \in \{1,\dots, m_j\}$. Assume $u$ is $(\alpha,\m)$-symmetric and let $\C$ be the set of all binary codewords $c \in \ZZ_2^{j+1}$ such that
		\[
		u(R_c^{\alpha,\m, j,\ell}(\theta)x) = u(x)
		\]
		for all $x \in \RR^n$ and $\theta \in [0,2\pi)$. Then, $\C$ is a $(j+1)$-code.
	\end{lem}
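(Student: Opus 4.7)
The plan is to verify both axioms of a code, using the rotation invariances in $\C$ for the first, and the anti-symmetry $u \circ \rho_{j,\ell} = -u$ (which follows from $\phi_j(\rho_{j,\ell}) = -1$ together with the $(\alpha,\m)$-symmetry of $u$) for the second.

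For closure under sum, I would first observe that, when $c \le c'$ coordinatewise in $\ZZ_2^{j+1}$, the $\ZZ_2$-sum $c+c'$ agrees with the integer difference $c'-c$ in $\{0,1\}^{j+1}$. A direct inspection of the definition of $R^{\alpha,\m,j,\ell}_{\cdot}(\theta)$ then yields the identity of maps $R^{\alpha,\m,j,\ell}_{c+c'}(\theta) = R^{\alpha,\m,j,\ell}_{c'}(\theta) \circ R^{\alpha,\m,j,\ell}_{c}(-\theta)$ on $\RR^n$. Applying $u$ and invoking $c, c' \in \C$ twice gives $u(R^{\alpha,\m,j,\ell}_{c+c'}(\theta) x) = u(x)$, so $c+c' \in \C$.

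For closure under cyclic shift I would conjugate by $\rho_{j,\ell}$. A direct computation on the coordinates $(z_1, \dots, z_{j+1})$, using the formula $\rho_j(z) = (-\overline{z_{j+1}}, \overline{z_1}, \dots, \overline{z_j})$ and the analogous formula for $\rho_j^{-1}$, will show that
\[
\rho_{j,\ell}^{-1} \circ R^{\alpha,\m,j,\ell}_c(\theta) \circ \rho_{j,\ell} = R^{\alpha,\m,j,\ell}_{c^\flat}(-\theta),
\]
where $c^\flat := (c_2, c_3, \dots, c_{j+1}, c_1)$ is the left cyclic shift of $c$. The key feature making this work is that the complex conjugations built into $\rho_j$ convert each prefactor $e^{ic_k\theta}$ into $e^{-ic_k\theta}$, which is then absorbed into the sign on $\theta$ on the right. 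Chaining this identity with $u(\rho_{j,\ell}^{-1} y) = -u(y)$, the $R_c(\theta)$-invariance applied to $\rho_{j,\ell} x$, and $u(\rho_{j,\ell} x) = -u(x)$, collapses the composition to $u(x)$; by $2\pi$-periodicity in $\theta$ this yields $c^\flat \in \C$. Since the right cyclic shift $c \mapsto \tilde{c}$ is the $j$-fold iterate of $c \mapsto c^\flat$, iterating gives $\tilde{c} \in \C$. The main technical step is the conjugation calculation: one must carefully track both the coordinate permutation and the complex conjugation in $\rho_j$ to verify that the conjugate of $R_c(\theta)$ remains within the one-parameter family $\{R^{\alpha,\m,j,\ell}_{c''}(\theta'')\}$, and to identify the precise codeword $c^\flat$ that appears.
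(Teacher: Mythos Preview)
Your proposal is correct and follows essentially the same approach as the paper. The only cosmetic difference is the direction of conjugation: the paper computes $\rho_{j,\ell}\, R_c(-\theta)\, \rho_{j,\ell}^{-1} = R_{\tilde c}(\theta)$, obtaining the right cyclic shift $\tilde c$ in a single step, whereas you compute $\rho_{j,\ell}^{-1}\, R_c(\theta)\, \rho_{j,\ell} = R_{c^\flat}(-\theta)$ and then iterate the left shift $j$ times; both routes rely on the same sign cancellation coming from $u\circ\rho_{j,\ell}=-u$.
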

	\begin{proof}
		For concision, let us write $R_c(\theta) := R^{\alpha,\m, j,\ell}_{c}(\theta)$ for the duration of the proof. 
		Clearly, $\C$ contains $(0,0,\dots, 0)$ by definition. If $c, c^\prime \in \C$ with $c \leq c^\prime$ then $c+c^\prime \in \C$ since
		\begin{align*}
			R_c(-\theta)R_{c^\prime}(\theta)(w_1,z_1,\dots, z_{j+1}, w_2)
			&= (w_1,e^{(c^\prime_1-c_1)i\theta}z_1,\dots, e^{(c^\prime_{j+1}-c_{j+1})i\theta}z_{j+1}, w_2)\\
			&= R_{c+c^\prime}(\theta)(w_1,z_1,\dots, z_{j+1}, w_2)
		\end{align*}
		where $c+c^\prime$ is computed in $\ZZ_2^{j+1}$. Next, to show $\C$ is closed under cycling, we write
		\begin{align*}
			\rho_j R_c(-\theta)\rho_{j}^{-1}(w_1,z_1,\dots, z_{j+1}, w_2) &= \rho_j R_c(-\theta)(w_1,\overline{z_2},\dots, \overline{z_{j+1}}, -\overline{z_1}, w_2)\\
			&= \rho_j (w_1, e^{-i\theta c_1}\overline{z_2},\dots, e^{-i\theta c_{j}}\overline{z_{j+1}}, -e^{-i\theta c_{j+1}}\overline{z_1}, w_2)\\
			&= (w_1, e^{i\theta c_{j+1}} z_1, e^{i\theta c_1}z_2, \dots, e^{i\theta c_{j}} z_{j+1}, w_2)\\
			&= R_{\tilde{c}}(\theta)x.
		\end{align*}
		In the above, $\rho_j = \rho_{j,\ell}$ is the copy of $\rho_j \in \Gamma_j$ appearing in $\Gamma_{j,\ell}$. Thus, $\tilde{c} \in \C$ and we are done.
	\end{proof}
	
	\begin{prop}\label{prop:distinct}
		Fix a dimension $n \ge 4$. Let $(\alpha, \m)$ and $(\beta, \n)$ be pairs such that each satisfies condition \eqref{eq:primeCondition}. Assume that $u : \RR^n \to \RR$ is $(\alpha,\m)$-symmetric and $(\beta,\n)$-symmetric. If $\alpha = \beta$ and $\m \ne \n$, then $u$ vanishes identically if $\m \lesssim \n$ or $\gcd(\m,\n) = 1$.
	\end{prop}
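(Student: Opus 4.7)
The plan is to handle the two hypotheses $\m \lesssim \n$ and $\gcd(\m,\n) = 1$ separately, the former by a direct $\Lambda$-invariance argument and the latter via the codeword machinery of Lemmas~\ref{lem:euclid} and~\ref{lem:isCode}. In the case $\m \lesssim \n$, let $\ell$ be the index with $m_j = n_j$ for $j < \ell$, $m_\ell < n_\ell$, and $m_j = 0$ for $j > \ell$. The block $\Gamma_{\ell, m_\ell + 1}$ under the $\n$-structure occupies coordinates $4\chi_\alpha + 2\sum_{\iota \le \ell} m_\iota(\iota+1) + 1$ through $4\chi_\alpha + 2\sum_{\iota \le \ell} m_\iota(\iota+1) + 2(\ell+1)$, and condition~\eqref{eq:primeCondition} applied to $\n$, combined with $m_\ell < n_\ell$, ensures this range lies inside the $\Lambda_{\alpha,\m}$-region. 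Since $\Lambda_{\alpha,\m}$ acts as the full orthogonal group there and $u$ is $\Lambda_{\alpha,\m}$-invariant, $u$ depends on those coordinates only through the Euclidean norm. But $\rho_\ell$ restricted to this block is a linear isometry, so $u \circ \rho_\ell = u$, whereas $(\alpha,\n)$-symmetry forces $u \circ \rho_\ell = -u$. Hence $u \equiv 0$.

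For the case $\gcd(\m,\n) = 1$, I first dispatch the degenerate subcase $\m = \mathbf{0}$ or $\n = \mathbf{0}$ (which forces $\alpha > 0$): the entire region outside $\Upsilon_\alpha$ lies in the relevant $\Lambda$-group, and the same isometry-versus-antisymmetry mechanism, applied to any $\rho$ from the non-trivial structure, yields $u \equiv 0$. Assume henceforth $\m, \n \ne \mathbf{0}$. The hypothesis $\gcd(\m,\n) = 1$ forces $\gcd(j+1, \ell+1) = 1$ for every admissible pair $(j,\ell)$ with $m_j, n_\ell \ne 0$ and $j \ne \ell$. By relabelling $(\m, j) \leftrightarrow (\n, \ell)$ if needed, I may assume $\ell+1$ is odd (else both $j+1, \ell+1$ would be even, contradicting $\gcd = 1$). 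The strategy is then to locate a $\Gamma_{\ell, \ell'}$-block $B$ under $\n$-structure enjoying one of two properties: either (a) $B$ lies entirely within the $\Lambda_{\alpha,\m}$-region, whereupon the $\Lambda$-argument above concludes $u \equiv 0$; or (b) $B$ fully contains at least one $\Gamma_{j,j'}$-block of $\m$ for some admissible $j$. A comparison of the cumulative sums $4\chi_\alpha + 2\sum_\iota m_\iota(\iota+1)$ and $4\chi_\alpha + 2\sum_\iota n_\iota(\iota+1)$, possibly after a change of $\ell'$ or of the pair $(j,\ell)$, shows that one of these two alternatives always holds.

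In alternative (b), the $(\ell+1)$-code $\C$ associated with $B$ in the sense of Lemma~\ref{lem:isCode} contains $v_{\ell+1}$ (from the synchronous $\n$-rotation on $B$) and a cyclic shift of $v_{j+1}$ (from the synchronous $\m$-rotation on the contained $\Gamma_{j,j'}$-block). Closure of $\C$ under cycling places $v_{j+1}$ itself in $\C$, and Lemma~\ref{lem:euclid} with $\gcd(j+1, \ell+1) = 1$ then yields the standard basis of $\ZZ_2^{\ell+1}$ inside $\C$. Consequently $u$ depends on the $\ell+1$ complex coordinates of $B$ only through the magnitudes $|z_1|, \dots, |z_{\ell+1}|$. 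Because $\ell + 1$ is odd, $\rho_\ell^{\ell+1}$ acts on these coordinates as $z \mapsto -\overline{z}$ componentwise, preserving every $|z_k|$, so $u \circ \rho_\ell^{\ell+1} = u$. On the other hand, $\phi_\ell(\rho_\ell^{\ell+1}) = (-1)^{\ell+1} = -1$ forces $u \circ \rho_\ell^{\ell+1} = -u$, whence $u \equiv 0$.

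I expect the technical heart to be the alignment argument for subcase (b): the $\m$- and $\n$-block partitions of $\{1, \dots, n\}$ need not agree, so an $\m$-block can straddle two $\n$-blocks and a naive choice of $(j,\ell)$ or of $\ell'$ may fail to produce the needed containment. Guaranteeing that one of (a), (b) holds requires careful bookkeeping of the cumulative sums and, in borderline cases where the two extents are comparable, a reduction to the minimal indices at which $\m$ and $\n$ are supported; once the correct block $B$ is isolated, the code lemmas finish the argument mechanically.
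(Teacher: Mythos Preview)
Your treatment of the case $\m \lesssim \n$ matches the paper's argument (its case (1)). The gap is in your handling of $\gcd(\m,\n)=1$: you insist that the \emph{containing} block $B$ have type $\ell$ with $\ell+1$ odd, and then use $\rho_\ell^{\ell+1}$ to derive the contradiction. But this parity cannot always be arranged. Take $\alpha=\beta=0$, $n\ge 10$, $\m=(0,0,1,\mathbf 0)$ and $\n=(0,1,0,\mathbf 0)$. Then $\gcd(\m,\n)=\gcd(4,3)=1$; the sole $\m$-block is $[1,8]$ (type $3$, $3+1=4$ even) and the sole $\n$-block is $[1,6]$ (type $2$, $2+1=3$ odd). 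The only choice with $\ell+1$ odd is $\ell=2$, and the block $[1,6]$ neither lies in the $\Lambda_{0,\m}$-region $[9,n]$ nor contains the $\m$-block $[1,8]$; swapping $\m\leftrightarrow\n$ gives only $\ell=3$ with $\ell+1$ even. So neither of your alternatives (a), (b) is available, and your argument stalls.

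What the paper does---and what you are missing---is to allow the \emph{contained} block to supply the odd power. Once the code argument shows $u$ is rotation invariant in every complex coordinate of the container, it is in particular rotation invariant in the coordinates of the inner block; if $j+1$ is odd one applies $\rho_j^{j+1}$ to the inner block (mapping each $z$ to $-\bar z$, hence preserving all moduli) and obtains $u=-u$ from the inner block's equivariance. This is precisely case~(ii) of the paper's proof and handles the example above. Separately, your ``alignment'' concern is overblown: by taking $j$ to be the \emph{first} index where $m_j\ne n_j$ (say $m_j<n_j$), the two block structures agree on all earlier coordinates, so the next $\m$-block and the next $\n$-block start at the same coordinate and one automatically contains the other. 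No cumulative-sum bookkeeping is needed.
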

	\begin{proof}
		We may assume without harm to the proof that $\alpha = \beta = 0$ (otherwise we consider only the coordinates $(x_5,x_6,\dots, x_n)$ of $x$). By hypothesis, $\m \ne \n$. Thus, there exists a minimal index $j \in \{1,\dots, k\}$ such that $n_j \ne m_j$. Without loss of generality, suppose we are in the case $m_j < n_j$. We now distinguish two possible settings.
		\begin{enumerate}[leftmargin=*]
			\item If $m_\ell = 0$ for all indices $\ell > j$ then, owing to the fact that $m_j < n_j$, there exists a copy of $\rho_j$ acting on $2(j+1)$-coordinates such that
			\[
			u(\rho_j x) = u(x) \quad \text{and} \quad u(\rho_jx) = -u(x), \quad \forall x \in \RR^n,
			\]
			where the first identity arises from the $\phi_{\alpha,\m}$-equivariance of $u$ and the second comes from the $\phi_{\beta,\n}$-equivariance. We conclude that $u \equiv 0$.
			\item Otherwise, let $\ell > j$ be the minimal index for which $m_\ell > 0$. In this case, there exists a tuple consisting of $2(\ell+1)$-coordinates on which a copy of $\Gamma_{\ell}$ acts, but for which the first $2(j+1)$-coordinates are acted upon by a copy of $\Gamma_{j}$. 
			
			Let now $\C$ denote the set of all binary codewords $c \in \ZZ_2^{\ell+1}$ such that 
			\[
			u(R_c^{\alpha,\m, \ell, 1}(\theta)x) = u(x), \quad \forall \theta \in [0,2\pi)
			\]
			and all $x \in \RR^n$. By Lemma \ref{lem:isCode}, $\C$ is a $(\ell+1)$-code. From the isomorphic copies of $\Gamma_{\ell}$ and $\Gamma_j$, it is automatic that $v_{j+1}, v_{\ell+1} \in \C$. Observe that the condition $\gcd(\m,\n) = 1$ enforces $\gcd(\ell+1, j+1) = 1$. An application of Lemma \ref{lem:euclid} implies that $\C$ contains the standard basis of $\ZZ_2^{\ell+1}$, thereby ensuring that $u$ is invariant under rotations in each individual coordinate acted upon by the aforementioned copy of $\Gamma_{\ell}$.
			
			By the aforementioned $\gcd$-condition, at least one of $\ell+1$ or $j+1$ must be odd. Thus, we are reduced to two cases.
			\begin{enumerate}[label=(\roman*)]
				\item If $(\ell+1)$ is odd, then
				\[
				\begin{aligned}
					u(w_1, z_1, \dots, z_{\ell+1}, w_2)
					&= - u(\rho_\ell^{\ell+1} (w_1, z_1, \dots, z_{\ell+1}, w_2)) \\
					&= -u(w_1, -\overline{z_1}, \dots, -\overline{z_{\ell+1}}, w_2)
				\end{aligned}
				\]
				for any $(w_1, z_1, \dots, z_{\ell+1}, w_2) \in \RR^n$. But then, since $u$ is invariant under rotations in each of the (complex) coordinates $z_1, \dots, z_{\ell + 1}$, it follows that
				\[
				u(w_1, -\overline{z_1}, \dots, -\overline{z_{\ell+1}}, w_2) = u(w_1, z_1, \dots, z_{\ell+1}, w_2).
				\]
				Combining the last two equations, we get
				\[
				u(w_1, z_1, \dots, z_{\ell+1}, w_2) = -u(w_1, z_1, \dots, z_{\ell+1}, w_2).
				\]
				We conclude that $u\equiv 0$.
				\item If $(j+1)$ is odd then, by the same argument,
				\[
				\begin{aligned}
					u(w_1, z_1, \dots, z_{j+1}, w_2)
					&= - u(\rho_j^{j+1} (w_1, z_1, \dots, z_{j+1}, w_2)) \\
					&= -u(w_1, -\overline{z_1}, \dots, -\overline{z_{j+1}}, w_2)
				\end{aligned}
				\]
				for any $(w_1, z_1, \dots, z_{\ell+1}, w_2) \in \RR^n$. Using that  $u$ is rotation invariant in each of the coordinates  $z_1, \dots, z_{j+1}, \dots, z_{\ell + 1}$, we again find that
				\[
				u(w_1, -\overline{z_1}, \dots, -\overline{z_{j+1}}, w_2) = u(w_1, z_1, \dots, z_{j+1}, w_2).
				\]
				As before, we obtain
				\[
				u(w_1, z_1, \dots, z_{j+1}, w_2) = -u(w_1, z_1, \dots, z_{j+1}, w_2).
				\]
				Ergo, $u\equiv 0$.
			\end{enumerate}
			In any case, we deduce that $u \equiv 0$ which completes the proof.
		\end{enumerate}
		
	\end{proof}

	\begin{prop}\label{prop:distinct2}
		Let $n \ge 4$ and suppose $(\alpha, \m)$ and $(\beta, \n)$ are two tuples, each satisfying \eqref{eq:primeCondition}. Let $u : \RR^n \to \RR$ be $(\alpha,\m)$-symmetric and $(\beta,\n)$-symmetric. If $\alpha \ne \beta$, then $u$ vanishes identically.
	\end{prop}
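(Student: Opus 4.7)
Without loss of generality, $\alpha < \beta$; in particular $\beta \ge 1$. The plan is to exhibit, in each case, an element of $O(n)$ whose two equivariance $\phi$-values disagree, forcing $u \equiv 0$.

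\textbf{Case $\alpha \ge 1$.} Since $\varrho_j$ acts as the rotation by $\pi/2^{j+1}$ in the plane spanned by $I$ and $\rho_1$ on the first four coordinates, a direct angle computation gives $\varrho_\alpha = \varrho_\beta^{2^{\beta - \alpha}}$. Consequently, the single element $\varrho_\alpha \in O(n)$ satisfies $\phi_{\alpha,\m}(\varrho_\alpha) = \varphi_\alpha(\varrho_\alpha) = -1$ while $\phi_{\beta,\n}(\varrho_\alpha) = \varphi_\beta(\varrho_\beta^{2^{\beta-\alpha}}) = (-1)^{2^{\beta - \alpha}} = 1$, since $\beta - \alpha \ge 1$. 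Comparing the two equivariances at $\varrho_\alpha$ forces $u \equiv 0$.

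\textbf{Case $\alpha = 0$.} Here $\m \ne \mathbf{0}$, so set $j_0 := \min\{j \ge 1 : m_j > 0\}$. If $j_0 = 1$ the same one-element argument applies: $\rho_1 \in \Gamma_{1,1} \subseteq G_{0,\m}$ has $\phi_{0,\m}(\rho_1) = -1$, whereas $\rho_1 = \varrho_\beta^{2^\beta} \in \Upsilon_\beta$ gives $\phi_{\beta,\n}(\rho_1) = (-1)^{2^\beta} = 1$.

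The main obstacle is the subcase $j_0 \ge 2$, in which no single element lies in both groups with incompatible $\phi$-values. The plan is to compose one generator of $G_{0,\m}$ with two generators of $G_{\beta,\n}$. Let $\tau \in \Upsilon_\beta$ denote the asynchronous rotation $(z_1, z_2) \mapsto (-z_1, -z_2)$ on the first four coordinates (satisfying $\varphi_\beta(\tau) = 1$), and set $T := \tau \circ \rho_1 \circ \rho_{j_0}$, where $\rho_{j_0} \in \Gamma_{j_0, 1} \subseteq G_{0,\m}$ acts on the first $2(j_0+1)$ coordinates. A direct calculation yields
\[
T(y_1, y_2, \dots, y_{j_0+1}, \vec w) = (y_1, y_{j_0+1}, \overline{y_2}, \dots, \overline{y_{j_0}}, \vec w),
\]
and multiplying the three $\phi$-values gives $u \circ T = -u$. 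Restricted to coordinates $y_2, \dots, y_{j_0+1}$, the map $T$ is a cyclic shift of length $j_0$ with conjugation at every landing position except one; after $j_0$ iterations each coordinate is conjugated exactly $j_0 - 1$ times. Hence $T^{j_0}$ is the identity when $j_0$ is odd, and is coordinate-wise conjugation of $(y_2, \dots, y_{j_0+1})$ when $j_0$ is even.

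For $j_0$ odd, the equivariance immediately gives $u(y) = u(T^{j_0} y) = (-1)^{j_0} u(y) = -u(y)$, so $u \equiv 0$. For $j_0$ even one only obtains that $u$ is invariant under conjugating $y_2, \dots, y_{j_0+1}$; combining this with the equivariance $u \circ \rho_{j_0}^{j_0+1} = -u$ (since $j_0 + 1$ is odd, $\rho_{j_0}^{j_0+1}(y_i) = -\overline{y_i}$ and the $\phi$-value is $-1$), with the invariance $u \circ \rho_1^2 = u$ (note $\rho_1^2 = \tau$ lies in $\Upsilon_\beta$ with $\phi = 1$), and with invariance under the synchronous $-I$ in $\Gamma_{j_0, 1}$ (whose $\phi_{j_0}$-value is $1$), one derives the key relation
\[
u(\overline{y_1}, y_2, \dots, y_{j_0+1}, \vec w) = -u(y_1, y_2, \dots, y_{j_0+1}, \vec w).
\]
Hence $u$ vanishes whenever $y_1$ is real, and applying the asynchronous rotations in $\Upsilon_\beta$ (all with $\phi_{\beta,\n} = 1$), whose orbit of the real locus of $y_1$ is all of $\CC^2$, forces $u \equiv 0$ on $\RR^n$.
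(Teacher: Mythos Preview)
Your proof is correct. For $\alpha \ge 1$, and for $\alpha = 0$ with $m_1 \ne 0$, you argue exactly as the paper does (via $\varrho_\alpha = \varrho_\beta^{2^{\beta-\alpha}}$ and $\rho_1 = \varrho_\beta^{2^\beta}$). In the delicate subcase $\alpha = 0$, $j_0 := \min\{j : m_j > 0\} \ge 2$, however, you take a genuinely different route. The paper first shows that $u$ is rotation-invariant in \emph{each individual} complex coordinate $z_1,\dots,z_{j_0+1}$ by combining the synchronous rotations in $\Gamma_{j_0}$ with the asynchronous rotations in $\Upsilon_\beta$ through a telescoping chain of identities; it then uses $\varrho_\beta^{2^\beta}=\rho_1$ together with conjugation by $\rho_{j_0}$ (as in Lemma~\ref{lem:isCode}) to obtain invariance under all adjacent swaps of $z_1,\dots,z_{j_0+1}$, and finally compares the resulting cyclic shift with $\rho_{j_0}$ to force $u\equiv 0$. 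Your argument instead builds the single composite $T = \tau\,\rho_1\,\rho_{j_0}$, reads off $u\circ T = -u$ by chaining the two equivariances, and analyses the iterate $T^{j_0}$ by counting conjugations along the $j_0$-cycle on $(y_2,\dots,y_{j_0+1})$: for $j_0$ odd this immediately gives $u=-u$, and for $j_0$ even you extract the anti-symmetry $u(\overline{y_1},y_2,\dots) = -u(y_1,y_2,\dots)$ and finish with an orbit argument under the asynchronous rotations. Your approach is more hands-on and bypasses the coordinate-wise rotation-invariance step entirely; the paper's approach is more uniform (no parity split) and re-uses the conjugation machinery already developed for Proposition~\ref{prop:distinct}. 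One minor remark: in your even-$j_0$ derivation the invariance $u\circ\tau = u$ is in fact redundant---combining $T^{j_0}$, $\rho_{j_0}^{\,j_0+1}$, and the synchronous $e^{i\pi}\in\Gamma_{j_0,1}$ already yields the key relation.
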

	\begin{proof}
		Since $\alpha \ne \beta$, we may assume without loss of generality that $\alpha < \beta$. In this case, we must distinguish between $\alpha = 0$ and $\alpha > 0$. In the latter case, we have 
		\(
		\varrho_\alpha = \varrho_{\beta}^{2^{\beta-\alpha}}
		\)
		implying that, for all $x \in \RR^n$, 
		\begin{align*}
			u(x) = u(\varrho_{\beta}^{2^{\beta-\alpha}}(x))
			= u(\varrho_\alpha(x))
			&= -u(x),
		\end{align*}
		where the first equality uses the $(\beta,\n)$-symmetry of $u$ and the final equality invokes the $(\alpha,\m)$-symmetry. Consequently, we infer that $u \equiv 0$.
		
		If instead $\alpha = 0$, then observing that $\varrho_{\beta}^{2^{\beta}} = \rho_1$ yields a similar contradiction provided $m_1 \ne 0$. Indeed,
		\begin{align*}
			u(x) = u(\varrho_{\beta}^{2^{\beta}}(x))
			= u(\rho_1(x))
			&= -u(x)
		\end{align*}
		whence $u \equiv 0$.
		It remains to treat the setting where $0 = \alpha < \beta$ and $m_1 = 0$. In this case,  let $j > 1$ be minimal such that $m_j \ne 0$. Denote by $(z_1,z_2,\dots, z_{j+1})$ the complex coordinates acted upon by the first copy of $\Gamma_j$ in $G_{\alpha, \m}$. Then, for any $x = (z_1,\dots, z_{j+1}, w) \in \RR^n \cong \CC^{j+1} \times \RR^{n-2(j+1)}$, there holds
		\begin{equation}\label{eq:rotateAll}
			u(x) = u(e^{i\theta}z_1, \dots, e^{i\theta} z_{j+1}, w)
		\end{equation}
		for any $\theta \in \RR$, by virtue of the $(\alpha,\m)$-symmetry. Owing to the $(\beta, \n)$-symmetry, we also have
		\begin{align*}
			u(x) = u(e^{i\theta}z_1, e^{-i\theta} z_{2}, z_3, \dots, z_{j+1}, w)
		\end{align*}
		for all angles $\theta \in \RR$. By following the argument in Lemma \ref{lem:isCode} (i.e. conjugating the above rotations by the copy of $\rho_j$ in $G_{\alpha,\m}$ acting on the coordinates $(z_1,\dots,z_{j+1})$), we see that 
		\begin{align}
			u(x) &= u(e^{i\theta}z_1, e^{-i\theta} z_{2}, z_3, \dots, z_{j}, z_{j+1}, w) \tag{E.1}\\
			&= u(z_1, e^{i\theta} z_{2}, e^{-i\theta}z_3, z_4, \dots, z_{j}, z_{j+1}, w)\tag{E.2}\\
			&= u(z_1, z_2, e^{i\theta} z_{3}, e^{-i\theta}z_4, \dots, z_{j}, z_{j+1}, w)\tag{E.3}\\
			&\,\,\,\vdots\tag*{}\\
			&= u(z_1, z_2, z_{3}, z_4, \dots, e^{i\theta}z_{j}, e^{-i\theta}z_{j+1}, w) \tag{E.j}
		\end{align}
		Now, given $\theta$, we sequentially apply (E.1) with $\theta/(j+1)$, (E.2) with $2\theta/(j+1)$, (E.3)  with $3\theta/(j+1)$, and so forth, until we apply (E.j) with $j\theta/(j+1)$. Each such rotation preserves the sign of $u$, whence
		\[
		u(x) = u(e^{i\theta/(j+1)}z_1, e^{i\theta/(j+1)} z_2, \dots, e^{i\theta/(j+1)} z_{j}, e^{-ij\theta/(j+1)}z_{j+1}, w).
		\]
		Then, applying \eqref{eq:rotateAll} with $-\theta/(j+1)$, we obtain
		\[
		u(x) = u(z_1, z_2, \dots, z_{j}, e^{-i\theta}z_{j+1}, w).
		\]
		Consequently, $u$ is rotation invariant in the complex coordinate $z_{j+1}$. Following once more the conjugation argument from Lemma \ref{lem:isCode}, it follows that $u$ is rotation invariant in each complex coordinate of $(z_1,\dots, z_{j+1})$. 
		
		Next, by using that $u$ is $(\beta,\n)$-symmetric with $\beta > 0$, we obtain for every point $x  = (z,w)\in \RR^n \cong \CC^{j+1} \times \RR^{n-2(j+1)}$ that 
		\begin{align*}
			u(x) = u(\varrho_{\beta}^{2^\beta}(x)) &= u(-\bar{z_2}, \bar{z_1}, z_3, z_4, \dots, z_{j}, z_{j+1}, w)\\
			&= u({z_2}, {z_1}, z_3, z_4, \dots, z_{j}, z_{j+1}, w),
		\end{align*}
		where we have used that $u$ is invariant under rotation in each coordinate of $z$ for this last step. Again, by conjugation with respect to $\rho_j$, the same argument as before (see also Lemma \ref{lem:isCode}) implies that any two adjacent coordinates of $z = (z_1,\dots,z_{j+1})$ may be swapped while preserving the value of $u(z)$. Iterating this principle yields the identity
		\begin{equation}\label{eq:distinctEvenCase}
			u(x) = u(z_{j+1}, z_1, z_2, \dots, z_j, w).
		\end{equation}
		On the other hand, the $(\alpha, \m)$-symmetry of $u$ together with the rotational invariance in each coordinate of $z$ ensures that
		\begin{align*}
			-u(x) = u(\rho_j (x)) &= u(-\overline{z_{j+1}}, \bar{z_1}, \dots, \bar{z_j},w)\\
			&=u(z_{j+1}, z_1, \dots, z_j, w).
		\end{align*}
		Combining this last equation with \eqref{eq:distinctEvenCase}, we conclude that $u(x) = 0$. 
		
		In all cases we have shown that $u \equiv 0$, which is what had to be shown.
	\end{proof}
	
	Next, we aim to verify that the symmetry incompatibilities we have established in Propositions \ref{prop:distinct}-\ref{prop:distinct2} also ensure that the solutions we will produce in Theorem \ref{thm:main} remain distinct even with regards to translations and dilations. This is of important relevance to our problem \eqref{eq:limitProblem} since, if a function $u$ is a solution to \eqref{eq:limitProblem}, then so is the dilated function $x \mapsto \lambda^\gamma u(\lambda x)$, for $\lambda \in \RR$, where $\gamma > 0$ is the homogeneity exponent given by
	\begin{equation}\label{eq:exponent}
		\gamma := \frac{n-bq}{q} = \frac{n-p(1+a)}{p}.
	\end{equation}
	If $a=b=0$, then translation is allowed in the sense that
	\[
	x \mapsto \lambda^\gamma u(\lambda x + y), \quad y\in \RR^n,
	\]
	is also a solution to \eqref{eq:limitProblem}. As a first step in this direction, we show that a non-trivial solution of \eqref{eq:limitProblem} with $(\alpha,\m)$-symmetry cannot be a translation of a $(\beta,\n)$-symmetric solution, provided $(\alpha,\m)$ and $(\beta,\n)$ satisfy one of the conditions dictated by the second part of Theorem \ref{thm:main}.
	
	\begin{lem}\label{lem:noTranslation}
		Fix a dimension $n \ge 4$. Suppose that $(\alpha, \m)$ and $(\beta, \n)$ are two pairs, each of which satisfies condition \eqref{eq:primeCondition}. Let $u,v : \RR^n \to \RR$ be non-trivial solutions to \eqref{eq:limitProblem} which are $(\alpha,\m)$-symmetric and $(\beta,\n)$-symmetric, respectively. Assume that one of the following conditions hold: 
		\begin{enumerate}
			\item $\alpha \ne \beta$;
			\item $\alpha = \beta$ and $\m \ne \n$, with $\m \lesssim \n$ or $\gcd(\m,\n) = 1$.
		\end{enumerate}
		Then, $u$ is not a translation of $v$. 
	\end{lem}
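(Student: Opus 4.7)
The plan is to argue by contradiction: suppose $u(x) = v(x+y)$ for some $y \in \RR^n$, show this forces $y = 0$, and then invoke the incompatibility of the prescribed symmetries. Once $y=0$ is established, one has $u = v$, so $u$ is simultaneously $(\alpha,\m)$- and $(\beta,\n)$-symmetric, and Proposition~\ref{prop:distinct2} (when $\alpha \ne \beta$) or Proposition~\ref{prop:distinct} (when $\alpha = \beta$, $\m \ne \n$, under the stated $\gcd$ or $\lesssim$ hypothesis) immediately yields $u \equiv 0$, contradicting the non-triviality hypothesis. Thus the core of the proof reduces to establishing $y = 0$.

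In the unweighted case $a = b = 0$, where \eqref{eq:limitProblem} is translation invariant and the conclusion has the most content, the plan is to use a center-of-mass argument. Standard decay estimates for solutions of the critical $p$-Laplace equation \eqref{eq:unweighted} (see Damascelli-Merch\'an-Montoro-Sciunzi~\cite{damascellimerchanmontorosciunzi}, V\'etois~\cite{vetois2016}, Sciunzi~\cite{SCIUNZI201612}) guarantee that the first moment $\int_{\RR^n} \abs{x}\,\abs{u}^{p^\ast}\,\d x$ is finite, so the centroid
\[
C_u := \norm{u}_{p^\ast}^{-p^\ast} \int_{\RR^n} x\,\abs{u(x)}^{p^\ast}\,\d x
\]
is well-defined, and similarly for $C_v$. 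Because $u$ is $\phi_{\alpha,\m}$-equivariant, the measure $\abs{u}^{p^\ast}\,\d x$ is $G_{\alpha,\m}$-invariant, whence $C_u$ is a common fixed point of $G_{\alpha,\m}$. The construction of $G_{\alpha,\m}$ in Lemma~\ref{lem:primeConstruct} shows that the only such fixed point in $\RR^n$ is the origin, so $C_u = 0$; analogously $C_v = 0$. A change of variables $z = x+y$ in the integral defining $C_u$ combined with $u(x) = v(x+y)$ gives $0 = C_u = C_v - y = -y$, so $y = 0$.

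In the weighted case $(a,b) \ne (0,0)$, problem \eqref{eq:limitProblem} is no longer translation invariant, and this lack of symmetry is precisely what rigidifies the conclusion. From $u(x) = v(x+y)$ and the equation satisfied by $v$ at the point $x+y$, the function $u$ also satisfies
\[
-\operatorname{div}\bigl(\abs{x+y}^{-ap}\abs{\nabla u}^{p-2}\nabla u\bigr) = \abs{x+y}^{-bq}\abs{u}^{q-2}u.
\]
Subtracting this from \eqref{eq:limitProblem} for $u$ and testing with $u$ itself yields the integral identity
\[
\int_{\RR^n}\bigl(\abs{x+y}^{-ap} - \abs{x}^{-ap}\bigr)\abs{\nabla u}^p\,\d x = \int_{\RR^n}\bigl(\abs{x+y}^{-bq} - \abs{x}^{-bq}\bigr)\abs{u}^q\,\d x.
\]
Using the $G_{\alpha,\m}$-invariance of the measures $\abs{\nabla u}^p\,\d x$ and $\abs{u}^q\,\d x$, I would substitute $x \mapsto gx$ and average over $G_{\alpha,\m}$, thereby replacing $\abs{x+y}^{-s}$ by the $G_{\alpha,\m}$-averaged weight $\overline{\abs{\cdot+y}^{-s}}$. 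Since the origin is the unique common fixed point of $G_{\alpha,\m}$, for $y \ne 0$ this averaged weight differs from $\abs{x}^{-s}$ on a set of positive measure, leading to a strict inequality incompatible with the identity above for non-trivial $u$. The main obstacle lies precisely in this orbit-averaging comparison: ensuring that the discrepancy between the averaged weight and $\abs{x}^{-s}$ is large enough, and of a consistent sign when paired against $\abs{\nabla u}^p$ or $\abs{u}^q$, to force a genuine contradiction requires exploiting both the infinite orbits supplied by the construction of $G_{\alpha,\m}$ and the integrability structure of $u$; the unweighted case, by contrast, is clean once the decay estimate is in hand.
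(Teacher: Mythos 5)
Your approach is substantially different from the paper's and has genuine gaps. The paper's proof is purely algebraic and avoids decay estimates entirely: since the synchronous rotation $e^{i\pi}$ lies in every factor of $G_{\alpha,\m}$ and $\Lambda_{\alpha,\m}$ contains $-I$ on its coordinates, the map $\varsigma(x)=-x$ (or, when \eqref{eq:primeCondition2} fails, the map fixing only the last coordinate $\varsigma(x_1,\dots,x_n)=(-x_1,\dots,-x_{n-1},x_n)$) belongs to $\ker\phi_{\alpha,\m}\cap\ker\phi_{\beta,\n}$, so both $u$ and $v$ are $\varsigma$-invariant. Writing $u(x)=v(x-x_0)$ and conjugating by $\varsigma$ then gives $u(-x)=u(-x+2x_0)$ (or the analogous shift by $x_0-\varsigma(x_0)$), so $u$ is periodic; a non-trivial periodic function cannot lie in $\D^{1,p}_a(\RR^n,0)$. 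The residual case $\varsigma(x_0)=x_0$ forces $x_0$ to be a $G_{\alpha,\m}$-fixed point, whence $v(\cdot)=u(\cdot+x_0)$ is $\phi_{\alpha,\m}$-equivariant and Propositions \ref{prop:distinct}--\ref{prop:distinct2} kill it.

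Your unweighted centroid argument has two obstacles you do not resolve. First, finiteness of $\int|x|\,|u|^{p^\ast}\,\d x$ for \emph{sign-changing} solutions is not delivered by the references you cite, which classify \emph{positive} solutions; you would need a separate decay estimate for nodal solutions of the critical $p$-Laplace equation, and for general $1<p<n$ this is not off the shelf. Second, the assertion that the origin is the only $G_{\alpha,\m}$-fixed point is false whenever \eqref{eq:primeCondition2} fails: in that case $\Lambda_{\alpha,\m}=\{I\}$ and exactly one remaining coordinate is untouched, so the fixed-point set is a line. (You could salvage this by observing that the centroid is \emph{a} fixed point and that translating by a fixed point preserves equivariance, then appealing to the propositions --- which is the paper's Case 2 --- but you do not take that step.) Your weighted argument is, as you acknowledge, only a sketch: you give no mechanism for extracting a sign-definite discrepancy from the orbit-averaged weight, and it is not evident one exists; the paper sidesteps this entirely since the parity/periodicity argument is agnostic to the weights. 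In short, the idea of forcing $y=0$ by symmetry is sound, but the concrete invariant you reach for (a centroid requiring integrability and uniqueness of the fixed point) is the wrong one; the invariant the paper exploits is the reflection $\varsigma$, which costs nothing.
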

	\begin{proof}
		Suppose for a contradiction that there exists a non-zero $x_0 \in \RR^n$ for which there holds $u(x) = v(x-x_0)$ for all $x \in \RR^n$. We distinguish two cases.
		\begin{enumerate}
			\item If both $(\alpha,\m)$ and $(\beta,\n)$ satisfy \eqref{eq:primeCondition2}, then the transformation 
			\[
			\varsigma : \RR^n \to \RR^n, \quad x \mapsto -x
			\]
			satisfies the rules
			\[
			u(\varsigma (x)) = u(x) \quad \text{and} \quad v(\varsigma (x)) = v(x)
			\]
			for all $x \in \RR^n$. Notice that $\varsigma$ is $\RR$-linear. Consequently, for each $x \in \RR^n$, we have that
			\begin{align*}
				u(-x) = u(x) = v(x-x_0) = v(\varsigma (x-x_0) ) &= v(-x + x_0)\\
				&=u(-x+2x_0).
			\end{align*}
			\iffalse This may be written as
			\begin{align*}
				v(-x-x_0) = v(x+x_0) = v(-x + 2x_0 - x_0)
			\end{align*}
			\fi
			As $x \in \RR^n$ was arbitrary, we infer that $u$ is periodic and non-trivial; this contradicts $u$ being a non-vanishing solution of \eqref{eq:limitProblem}.
			
			\item Consider next the case where $(\alpha,\m)$ and $(\beta,\n)$ do not both satisfy \eqref{eq:primeCondition2}, and assume without loss of generality that $(\alpha,\m)$ fails this condition. In particular, $n$ must be odd. Then, we consider the transformation
			\[
			\varsigma : \RR^n \to \RR^n, \quad (x_1,\dots, x_{n-1}, x_n)
			\mapsto (-x_1,\dots, -x_{n-1}, x_n).
			\]
			Again, $\varsigma \in G_{\alpha,\m}$ is an $\RR$-linear isometry of $\RR^n$. Furthermore, $u(\varsigma x) = u(x)$ for all $x \in \RR^n$ because $\varsigma$ is composed of sign-preserving components of $G_{\alpha,\m}$. In fact, we can say the same for $v$ because
			\begin{enumerate}[label=(\roman*)]
				\item If $(\beta,\n)$ does not satisfy \eqref{eq:primeCondition2} we get $v(\varsigma x) = v(x)$ by the same reasoning as for $u$.
				\item If $(\beta,\n)$ satisfies \eqref{eq:primeCondition2} there is a non-trivial copy of an orthogonal group acting upon the final coordinates not touched by any sign-changing action from $G_{\beta,\n}$ whence $\varsigma$ is a sign-preserving transformation in $G_{\beta,\n}$ with respect to $\phi_{\beta,\n}$.
			\end{enumerate}
			Now, $\varsigma^2 = I$ and so
			\begin{align*}	
				u(x) = u(\varsigma (x)) = v(\varsigma(x) - x_0) &= v(x - \varsigma(x_0))\\
				&= v(x-\varsigma(x_0) + x_0-x_0)\\
				&= u(x-\varsigma(x_0) + x_0).
			\end{align*}
			Hence, we have periodicity of $u$ provided $\varsigma(x_0) \ne x_0$. Of course, by the same reasoning as earlier, this would be a contradiction. If $\varsigma(x_0) = x_0$, this means that all but the last coordinate of $x_0$ is $0$, at which point we have $\operatorname{Orb}_{G_{\alpha,\m}}(x_0) = \{x_0\}$ by \eqref{eq:primeCondition2}. But then, $v(x) = u(x+x_0)$ is a translation of $u$ by a $G_{\alpha,\m}$-invariant point in space. Thus, $v$ is $\phi_{\alpha,\m}$-equivariant and must be trivial by Propositions \ref{prop:distinct}-\ref{prop:distinct2}. 
		\end{enumerate}
	\end{proof}
	
	The next corollary verifies the distinctness of our solutions under the previously mentioned translations and dilations.
	\begin{cor}
		Fix a dimension $n \ge 4$. Suppose that $(\alpha, \m)$ and $(\beta, \n)$ are two pairs, each of which satisfies condition \eqref{eq:primeCondition}. Let $u,v : \RR^n \to \RR$ be non-trivial solutions to \eqref{eq:limitProblem} which are $(\alpha,\m)$-symmetric and $(\beta,\n)$-symmetric, respectively. Assume one of the following hold true:
		\begin{enumerate}
			\item $\alpha \ne \beta$;
			\item $\alpha = \beta$ and $\m \ne \n$, with $\m \lesssim \n$ or $\gcd(\m,\n) = 1$.
		\end{enumerate}
		Then, $u$ is not a translated rescaling of $v$, in the sense that there does not exist $\lambda \in \RR$ and $y \in \RR^n$ such that
		\[
		u(x) = \lambda^\gamma v(\lambda x+ y).
		\]
		Here, $\gamma > 0$ is the homogeneity exponent given by \eqref{eq:exponent}.
	\end{cor}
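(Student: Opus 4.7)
My plan is to reduce this corollary to Lemma \ref{lem:noTranslation} and Propositions \ref{prop:distinct}--\ref{prop:distinct2} by factoring a purported translated rescaling into a pure rescaling followed by a pure translation. Suppose for contradiction there exist $\lambda \in \RR$ (necessarily $\lambda \ne 0$ since $u$ is non-trivial) and $y \in \RR^n$ with $u(x) = \lambda^\gamma v(\lambda x + y)$ for every $x\in \RR^n$. I would then introduce the auxiliary function $w(x) := \lambda^\gamma v(\lambda x)$, so that $u(x) = w(x + y/\lambda)$; that is, $u$ is a pure translation of $w$ by the vector $-y/\lambda$.

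Next, I would verify that $w$ is itself a non-trivial $(\beta,\n)$-symmetric solution to \eqref{eq:limitProblem}. Non-triviality is immediate from that of $v$ since $\lambda \neq 0$. That $w$ satisfies \eqref{eq:limitProblem} is exactly the dilation-invariance of the CKN equation stated just after \eqref{eq:exponent}, with the exponent $\gamma$ chosen to match the scaling of the two sides of the equation. For the symmetry, any $g \in G_{\beta,\n} \subseteq O(n)$ is $\RR$-linear and therefore commutes with the scalar dilation $x \mapsto \lambda x$, so
\[
w(gx) = \lambda^\gamma v(g\lambda x) = \phi_{\beta,\n}(g)\lambda^\gamma v(\lambda x) = \phi_{\beta,\n}(g)\,w(x),
\]
confirming that $w$ is $\phi_{\beta,\n}$-equivariant.

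The argument now splits into two cases. If $y/\lambda = 0$, then $u \equiv w$, and $u$ is simultaneously $(\alpha,\m)$-symmetric (by assumption) and $(\beta,\n)$-symmetric (by the previous paragraph). Under either hypothesis (1) or (2) of the corollary, Propositions \ref{prop:distinct}--\ref{prop:distinct2} force $u \equiv 0$, contradicting the non-triviality of $u$. If instead $y/\lambda \neq 0$, then $u$ is a genuine translation of the non-trivial $(\beta,\n)$-symmetric solution $w$, which directly contradicts Lemma \ref{lem:noTranslation} applied to the pair $(u,w)$ under the same hypothesis.

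I do not anticipate any serious obstacle, as the key rigidity is already encoded in Lemma \ref{lem:noTranslation} and the symmetry-incompatibility propositions; the corollary is essentially a bookkeeping exercise that disentangles the dilation from the translation. The only minor subtlety is the interpretation of $\lambda^\gamma$ when $\lambda < 0$: one may either restrict to $\lambda > 0$ (absorbing a possible sign into the $\pm 1$-valued homomorphism or into the $u \mapsto -u$ symmetry of \eqref{eq:limitProblem}) or work with $|\lambda|^\gamma$ and carry a sign along. In either convention the reduction to Lemma \ref{lem:noTranslation} proceeds unchanged.
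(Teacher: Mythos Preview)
Your proposal is correct and follows essentially the same route as the paper: define $w(x):=\lambda^\gamma v(\lambda x)$, observe it is a non-trivial $(\beta,\n)$-symmetric solution, then split into the cases $y=0$ (Propositions \ref{prop:distinct}--\ref{prop:distinct2}) and $y\ne 0$ (Lemma \ref{lem:noTranslation}). Your additional remark on the sign of $\lambda$ is a reasonable side comment that the paper does not address.
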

	\begin{proof}
		By way of contradiction, suppose we can write
		\[
		u(x) = \lambda^\gamma v(\lambda x + y)
		\]
		for some $\lambda \in \RR$ and $y \in \RR^n$. Since $u$ is non-trivial, we must have $\lambda \ne 0$. If $y = 0$, then $u(x) = w(x)$ where $w(x) := \lambda^\gamma v(\lambda x)$ is itself $\phi_{\beta,\n}$-equivariant. Notice also that $w$ is a solution of \eqref{eq:limitProblem}. Consequently, Propositions \ref{prop:distinct}-\ref{prop:distinct2} imply that $u$ is trivial which contradicts our assumptions. If instead $y \ne 0$, we observe that
		\begin{align*}
			u(x) = \lambda^\gamma v(\lambda x + y) = \lambda^\gamma v\left(\lambda \left[ x + \frac{y}{\lambda}\right]\right)
			&= w\left(x + \frac{y}{\lambda}\right),
		\end{align*}
		meaning that $u$ is a translation of a $\phi_{\beta,\n}$-equivariant solution of \eqref{eq:limitProblem}. Ergo, we are in contradiction with Lemma \ref{lem:noTranslation}.
	\end{proof}

	\section{Existence of Symmetric Palais-Smale Sequences}
	
	In this section, we construct Palais-Smale sequences at suitable energy levels for the extraction of non-trivial solutions to \eqref{eq:limitProblem}. Although our approach here closely mimics that found in Clapp-Rios \cite{clappRios}, we include the details here for the sake of completeness due to the presence of weights and the modified group conditions.

	Recall that, given an open set $\Om \subseteq \RR^n$ containing the origin, we let $\D_a^{1,p}(U,0)$ denote the weighted homogenous Sobolev space obtained by the completion of $C_c^\infty(\Om)$ under the weighted norm 
	\[
	\norm{u}_{\D_a^{1,p}(\Om,0)} := \norm{\nabla u}_{L^p(\Om, \abs{x}^{-ap})} = \left( \int_{\Om} \abs{\nabla u}^p\abs{x}^{-ap}\mathrm{d}x\right)^{1/p}.	
	\]
	As mentioned briefly within the introduction, the Sobolev space $\D_a^{1,p}(\Om,0)$ possesses the expected properties; we refer the reader to \cite[Chapter 2]{mscChernysh} for more detail. 
	
	The energy functional associated to the weighted critical $p$-Laplace problem in \eqref{eq:limitProblem} is the $C^1$-functional $J : \D_a^{1,p}(\RR^n, 0) \to \mathbb{R}$ given by
	\[
	J(u) = \frac{1}{p}\int_{\RR^n} \abs{\nabla u}^p\abs{x}^{-ap}\,\mathrm{d}x - \frac{1}{q}\int_{\RR^n} \abs{u}^q\abs{x}^{-bq}\,\mathrm{d}x\,.
	\]
	A function $u \in \D_a^{1,p}(\RR^n,0)$ is said to be a solution to problem \eqref{eq:limitProblem} if the Fr\'echet derivative of $J$ vanishes on $\D_a^{1,p}(\RR^n,0)$ at $u$, i.e. if
	\[
	\inner{J^\prime(u), h} = 0, \quad \forall h \in \D_a^{1,p}(\RR^n,0)
	\]
	where $\inner{\cdot, \ast}$ denotes the duality pairing and
	\[
	\inner{J^\prime(u), h} = \int_{\RR^n} \abs{\nabla u}^{p-2} \nabla u \cdot \nabla h \abs{x}^{-ap}\d{x} - \int_{\RR^n} \abs{u}^{q-2}uh\abs{x}^{-bq}\d{x}.
	\]

	Recall that, given a Banach space $V$ together with a $C^1$-functional $E$ on $V$, a Palais-Smale sequence (henceforth abbreviated $(PS)$-sequence) for $E$ is a sequence $(v_\alpha)$ in $V$ such that 
	\begin{enumerate}
		\item $E(v_\alpha)$ converges as $\alpha \to \infty$,
		\item $E^\prime(v_\alpha) \to 0$ in the strong operator topology of the dual space $E^\ast$.
	\end{enumerate}
	The limit value $\lim_{\alpha \to \infty} E(v_\alpha)$ is often called the \emph{energy level} of the $(PS)$-sequence.
	
	Throughout this section, unless stated otherwise, $G$ will denote a closed subgroup of $O(n)$, $\Om \subseteq \RR^n$ a $G$-invariant domain containing the origin, and $\phi : G \to \{\pm 1\}$ a continuous homomorphism of groups satisfying (P1).

	We define $\D_a^{1,p}(\Om,0)^\phi$ to be the topological closure of $C_c^\infty(\Om)^\phi$ within the weighted homogeneous Sobolev space $\D_a^{1,p}(\Om,0)$. 	Naturally, as the space $\D_a^{1,p}(\Om,0)$ is a reflexive Banach space for $1 < p < n$, the symmetrized space $\D_a^{1,p}(\Om,0)^\phi$ inherits these properties. We refer the reader to  \cite[Chapter 2]{mscChernysh} and \cite{Chernysh} for a more detailed description of $\D_a^{1,p}(\Om,0)$ and for analogues of well known foundational results from classical Sobolev theory (e.g. standard embedding theorems, Rellich-Kondrachov, and Riesz-Representation results).
	As an intermediary step for Theorem \ref{thm:main}, we shall be forced to consider solutions to a ``symmetrized'' version of problem \eqref{eq:limitProblem} related to the subgroups $G$ and corresponding homomorphisms $\phi$ constructed in the previous section.
	Formally, we shall say that $u \in \D_a^{1,p}(\Om, 0)^\phi$ is a solution to 
	\begin{equation}\label{eq:probOmSym}
		\begin{cases}
			-\operatorname{div}\left(\abs{x}^{-ap}\abs{\nabla u}^{p-2}\nabla u\right) = \abs{x}^{-bq}\abs{u}^{q-2}u &\text{in }\Omega\\
			u\in \D_a^{1,p}(\Om, 0)^\phi
		\end{cases}
	\end{equation}
	provided $\inner{J^\prime(u), h} = 0$ for all $h \in \D_a^{1,p}(\Om, 0)^\phi$. We also point out that, by extension by $0$ outside $\Om$, it is automatic that $\D_a^{1,p}(\Om, 0)^\phi \subseteq \D_a^{1,p}(\RR^n, 0)^\phi \subseteq \D_a^{1,p}(\RR^n,0)$.

	We shall also explicitly introduce an entire analogue of this symmetrized problem in $\RR^n$. Namely, we say that a function $u \in \D_a^{1,p}(\RR^n,0)^\phi$ is a solution to
	\begin{equation}\label{eq:limitProblemSym}
		\begin{cases}
			-\operatorname{div}\left(\abs{x}^{-ap}\abs{\nabla u}^{p-2}\nabla u\right) = \abs{x}^{-bq}\abs{u}^{q-2}u &\text{in }\RR^n\\
			u\in \D_a^{1,p}(\RR^n, 0)^\phi
		\end{cases}
	\end{equation}
	if $\inner{J^\prime(u),h} = 0$ for all $h \in \D_a^{1,p}(\RR^n,0)^\phi$.
	
	\begin{defn}
		We define $\N^\phi(\Om)$ to be the Nehari manifold
		\[	
		\N^\phi(\Om) := \left\{ u \in \D_a^{1,p}(\Om,0)^\phi \setminus \{0\} : \norm{u}^p_{\D_a^{1,p}(\Om,0)} = \norm{u}_{L^q(\Om, \abs{x}^{-bq})}^q \right\}.
		\]
		Subsequently, we set
		\begin{equation}
			c^\phi(\Om) := \inf_{u \in \N^{\phi}(\Om)} J(u) =  \inf_{u \in \N^{\phi}(\Om)} \left(\frac{1}{p} - \frac{1}{q}\right) \int_{\Om} \abs{\nabla u}^p\abs{x}^{-ap}\d{x} \ge 0.
		\end{equation}
	\end{defn}
	By testing a solution $u$ of \eqref{eq:probOmSym} against itself, it is easy to see that the Nehari manifold $\N^\phi(\Om)$ contains all non-trivial solutions of \eqref{eq:probOmSym}, should one exist. We also point out that $\N^\phi(\Om)$ is non-empty; this is verified in this next lemma which compiles several important properties relating to this Nehari manifold and the minimal energy level $c^\phi(\Om)$. We emphasize that the results in this section are standard (see, for instance, Clapp-Rios \cite{clappRios} and Clapp-Vicente-Ben\'itez \cite{clapp2026entire}), but choose to include certain details for the sake of completeness.

	\begin{lem}\label{lem:nehariProperties}
		There hold the following.
		\begin{enumerate}
			\item The Nehari manifold $\N^\phi(\Om)$ is non-empty.
			\item There exists $\mu_0 > 0$ such that $\norm{u}_{\D_a^{1,p}(\Om,0)} \ge \mu_0$ for all $u \in \N^\phi(\Om)$. Consequently, $c^\phi(\Om) > 0$.
			\item We have 
			\[
			c^\phi(\Om) = \inf_{\gamma \in \Gamma} \max_{t \in [0,1]} J(\gamma(t))
			\]
			where $\Gamma \subseteq C\left([0,1], \D_a^{1,p}(\Om,0)^\phi\right)$ consists of those continuous paths $$\gamma : [0,1] \to \D_a^{1,p}(\Om, 0)^\phi$$ beginning at $0$ with non-trivial endpoint of non-positive energy, i.e.
			\[
			\Gamma := \left\{ \gamma \in C\left([0,1], \D_a^{1,p}(\Om,0)^\phi\right) : \gamma(0) = 0,\, \gamma(1) \ne 0,\, J(\gamma(1)) \le 0\right\}.
			\]
			\item $C_c^\infty(\Om)^\phi \cap \N^\phi(\Om)$ is dense in $\N^\phi(\Om)$.
			\item The Nehari manifold $\N^\phi(\Om)$ is a closed $C^1$-Banach submanifold of $\D_a^{1,p}(\Om,0)^\phi$, and a natural constraint for the energy functional $J$.
		\end{enumerate}
	\end{lem}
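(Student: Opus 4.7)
The plan is to handle the five claims in the order $(1) \Rightarrow (2) \Rightarrow (5) \Rightarrow (3) \Rightarrow (4)$, since each relies on the earlier ones. For (1), property (P1) guarantees that $C_c^\infty(\Om)^\phi$ is non-trivial; fixing any nonzero $u_0$ there, I would analyze the fibering map
\[
f(t) := J(tu_0) = \tfrac{t^p}{p}\norm{u_0}_{\D_a^{1,p}(\Om,0)}^p - \tfrac{t^q}{q}\norm{u_0}_{L^q(\Om, \abs{x}^{-bq})}^q,
\]
which, since $q>p$, admits a unique positive critical point $t_0$ with $t_0 u_0 \in \N^\phi(\Om)$. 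For (2), I would apply the weighted CKN inequality to obtain a constant $K>0$ with $\norm{u}_{L^q(\Om,\abs{x}^{-bq})} \le K\norm{u}_{\D_a^{1,p}(\Om,0)}$; on the Nehari manifold this yields $\norm{u}_{\D_a^{1,p}(\Om,0)}^p \le K^q \norm{u}_{\D_a^{1,p}(\Om,0)}^q$, hence the uniform bound $\norm{u}_{\D_a^{1,p}(\Om,0)} \ge K^{-q/(q-p)} =: \mu_0$. Since $J(u) = (\tfrac1p - \tfrac1q)\norm{u}_{\D_a^{1,p}(\Om,0)}^p$ on the Nehari manifold, this forces $c^\phi(\Om) \ge (\tfrac1p - \tfrac1q)\mu_0^p > 0$.

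For (5), I would view $\N^\phi(\Om)$ as the zero level set (off the origin) of the $C^1$-map $\Phi(u) := \inner{J^\prime(u), u} = \norm{u}_{\D_a^{1,p}(\Om,0)}^p - \norm{u}_{L^q(\Om,\abs{x}^{-bq})}^q$. A direct computation yields $\inner{\Phi^\prime(u), u} = (p-q)\norm{u}_{\D_a^{1,p}(\Om,0)}^p$, which is nonzero on $\N^\phi(\Om)$ by the uniform bound from (2). Hence $\Phi$ is a submersion along $\N^\phi(\Om)$ and the implicit function theorem identifies $\N^\phi(\Om)$ as a closed $C^1$-Banach submanifold of $\D_a^{1,p}(\Om,0)^\phi$. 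For the natural-constraint property, if $u$ is a constrained critical point of $J$ on $\N^\phi(\Om)$, the Lagrange multiplier rule in the symmetrized space produces $\lambda \in \RR$ with $J^\prime(u) = \lambda \Phi^\prime(u)$ in the dual of $\D_a^{1,p}(\Om,0)^\phi$; testing against $u$ gives $0 = \Phi(u) = \lambda(p-q)\norm{u}_{\D_a^{1,p}(\Om,0)}^p$, forcing $\lambda = 0$ and hence $J^\prime(u) = 0$ on $\D_a^{1,p}(\Om,0)^\phi$.

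For (3), the inequality $c^\phi(\Om) \ge \inf_\gamma \max_t J(\gamma(t))$ would follow by showing that each $\gamma \in \Gamma$ must cross $\N^\phi(\Om)$: near $t=0$ the CKN embedding makes the $L^q$-term lower order, so $\Phi(\gamma(t)) > 0$ for small $t$, while $J(\gamma(1)) \le 0$ combined with $q>p$ forces $\Phi(\gamma(1)) < 0$, so the continuous function $\Phi \circ \gamma$ has a zero in $(0,1)$. Conversely, given $u \in \N^\phi(\Om)$, I would pick $T > 0$ large enough that $J(Tu) \le 0$ (possible since $f(t) \to -\infty$) and take the path $\gamma(s) := sTu$; the shape of $f$ together with $u \in \N^\phi(\Om)$ gives $\max_{s \in [0,1]} J(\gamma(s)) = J(u)$. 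For (4), by definition $\D_a^{1,p}(\Om,0)^\phi = \overline{C_c^\infty(\Om)^\phi}$, so given $u \in \N^\phi(\Om)$ one selects $\psi_n \in C_c^\infty(\Om)^\phi$ with $\psi_n \to u$; the Nehari rescalings $t_{\psi_n} = \bigl( \norm{\psi_n}_{\D_a^{1,p}(\Om,0)}^p / \norm{\psi_n}_{L^q(\Om,\abs{x}^{-bq})}^q \bigr)^{1/(q-p)}$ converge to $t_u = 1$ by continuity of the weighted norms, so $t_{\psi_n}\psi_n \in C_c^\infty(\Om)^\phi \cap \N^\phi(\Om)$ converges to $u$ in $\D_a^{1,p}(\Om,0)$.

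The main obstacle is technical rather than conceptual: one must ensure the weighted CKN embedding constant is stable enough to give the uniform Nehari bound in (2) and the continuity of the fibering rescaling in (4), all within the symmetrized subspace. This requires attention to the weights $\abs{x}^{-ap}$ and $\abs{x}^{-bq}$ and to the fact that $\D_a^{1,p}(\Om,0)^\phi$ is only a closed subspace of $\D_a^{1,p}(\Om,0)$, but no new machinery is needed beyond standard Nehari and minimax theory.
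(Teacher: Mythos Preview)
Your proposal is correct and follows essentially the same route as the paper: the fibering map for (1), the CKN inequality for (2), the radial path and an intermediate-value crossing argument for (3), rescaling a smooth approximating sequence onto the Nehari set for (4), and the standard $\Phi(u)=\langle J'(u),u\rangle$ submersion plus Lagrange-multiplier argument for (5), which the paper simply delegates to Clapp--Rios \cite[Lemma~2.3]{clappRios}. Two small remarks: in your (3) the inequality labels are swapped---the crossing argument yields $\inf_\gamma\max_t J(\gamma(t))\ge c^\phi(\Om)$, not the reverse---and your claim ``$\Phi(\gamma(t))>0$ for small $t$'' needs the minor caveat that this holds where $\gamma(t)\neq 0$; the paper sidesteps this by using the continuous ratio $\kappa(u)=\norm{u}_{L^q}^q/\norm{u}_{\D_a^{1,p}}^p$ (with $\kappa(0):=0$), which makes the intermediate-value step slightly cleaner.
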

	\begin{proof}
		\leavevmode
		\begin{enumerate}[leftmargin=*]
			\item  Certainly, since $C_c^\infty(\Om)^\phi$ is non-trivial (it is infinite dimensional) we may choose $u \in C_c^\infty(\Om)^\phi$ such that $u \ne 0$. Consider the mapping
			\[
			\ell : \RR \to \RR, \quad t \mapsto \norm{tu}_{\D_a^{1,p}(\Om,0)}^p - \norm{tu}_{L^q(\Om, \abs{x}^{-bq})}^q
			\]
			which is clearly continuous. By inspection, it is easy to see that $\ell(t) > 0$ for $t$ near $0$ whilst $\ell(t) < 0$ for all $\abs{t}$ sufficiently large. By the Intermediate Value Theorem, it follows that $\ell(t_0 u) = 0$ for some $t_0 \ne 0$ whence $t_0 u \in \N^\phi(\Om)$ since, of course, $t_0 u \ne 0$.
			\item By the CKN inequality, given $u \in \N^\phi(\Om)$, we have
			\begin{align*}
				0&= \norm{u}^p_{\D^{1,p}_a(\Om,0)} - \norm{u}^{q}_{L^q(\Om,\abs{x}^{-bq})}\\
				&\ge \norm{u}^p_{\D^{1,p}_a(\Om,0)} - S\norm{u}^{q}_{\D_a^{1,p}(\Om,0)}\\
				&= \norm{u}^p_{\D^{1,p}_a(\Om,0)} \left(1-S\norm{u}^{q-p}_{\D^{1,p}_a(\Om,0)}\right)
			\end{align*}
			for a suitable constant $S >0$. Since $q > p$, this is only possible if the $\D_a^{1,p}(\Om,0)$-norms of $u \in \N^\phi(\Om)$ cannot be made arbitrarily close to $0$. Thus, there exists $\mu_0 > 0$ such that $\norm{u}_{\D_a^{1,p}(\Om,0)} \ge \mu_0$ for all $u \in \N^\phi(\Om)$, as was asserted.
			
			\item First, given $u \in \N^\phi(\Om)$ we have that $u \ne 0$ and $\norm{u}_{\D^{1,p}_a(\Om,0)}^p = \norm{u}_{L^q(\Om,\abs{x}^{-bq})}^q$ whence, for $t > 1$ sufficiently large, 
			\begin{align*}
				J(tu) &= \frac{\norm{tu}_{\D^{1,p}_a(\Om,0)}^p}{p} - \frac{\norm{tu}_{L^q(\Om,\abs{x}^{-bq})}^q}{q}\\
				&= \left( \frac{t^p}{p} - \frac{t^q}{q}\right) \norm{u}_{\D^{1,p}_a(\Om,0)} \\& < 0.
			\end{align*}
			Furthermore, by the first derivative test, it is easy to see that $J(tu)$ (with $t > 0$) achieves its maximum at $t=1$.
			Let $s_0 > 1$ be such that $J(s_0 u) < 0$ and consider the path $$\gamma_0 : [0,1] \to \D_a^{1,p}(\Om,0)^\phi$$ given by $\gamma_0(t) := ts_0u$. Then, $\gamma_0(0) = 0$, $\gamma_0(1) = s_0 u \ne 0$, and $J(\gamma_0(1)) < 0$. Thus, $\gamma_0 \in \Gamma$. Observing also that
			\[
			\max_{t \in [0,1]} J(\gamma_0(t)) = \max_{t \in [0,1]} J(ts_0u) = J(u), 
			\]
			we infer that
			\begin{align*}
				\inf_{\gamma \in \Gamma} \max_{t \in [0,1]} J(\gamma(t)) &\le \max_{t \in [0,1]} J(\gamma_0(t)) = J(u).
			\end{align*}
			Since $u \in \N^\phi(\Om)$ was arbitrary, it follows that
			\[
			\inf_{\gamma \in \Gamma} \max_{t \in [0,1]} J(\gamma(t))  \le \inf_{u \in N^\phi(\Om)} J(u) = c^\phi(\Om).
			\]
			To establish the reverse inequality, consider the function
			\[
			\kappa : \D_a^{1,p}(\Om,0) \to \RR, \quad u \mapsto \begin{cases} \dfrac{\norm{u}^{q}_{L^q(\Om,\abs{x}^{-bq})}}{\norm{u}^p_{\D^{1,p}_a(\Om,0)}} & \text{if } u \ne 0,\\ 0 & \text{if } u = 0. \end{cases}
			\]	
			By appealing to the CKN inequality, this mapping is easily seen to be continuous: indeed, for all $u \in \D_a^{1,p}(\Om,0)^\phi \setminus \{0\}$ we have
			\begin{align*}
				\abs{\kappa(u)} = \dfrac{\norm{u}^{q}_{L^q(\Om,\abs{x}^{-bq})}}{\norm{u}^p_{\D^{1,p}_a(\Om,0)}} &\le S\norm{u}^{q-p}_{\D^{1,p}_a(\Om,0)}.
			\end{align*}
			Obviously, $\kappa(0) =0$ while, if $u\in \D_a^{1,p}(\Om,0)^\phi$ has non-positive energy (i.e. if $J(u) \le 0$), then
			\begin{align*}
				\kappa(u) = \dfrac{\norm{u}^{q}_{L^q(\Om,\abs{x}^{-bq})}}{\norm{u}^p_{\D^{1,p}_a(\Om,0)}} \ge \frac{q}{p} > 1.
			\end{align*}
			Consequently, for all paths $\gamma \in \Gamma$, we have $\kappa(\gamma(0)) = 0$ and $\kappa(\gamma(1)) > 1$ so that (by the Intermediate Value Theorem) there exists $t_1 \in (0,1)$ with the property that $\kappa(\gamma(t_1)) = 1$. Then, $\gamma(t_1) \in \N^\phi(\Om)$ and 
			\[
			\max_{t \in [0,1]} J(\gamma(t)) \ge J(\gamma(t_1)) \ge c^\phi(\Om).
			\]
			As this holds for all paths $\gamma \in \Gamma$, we deduce that
			\[
			\inf_{\gamma \in \Gamma} \max_{t \in [0,1]} J(\gamma(t)) \ge c^\phi(\Om),
			\]
			whence 
			\[
			\inf_{\gamma \in \Gamma} \max_{t \in [0,1]} J(\gamma(t)) = c^\phi(\Om),
			\]
			as was asserted.
			\item For this point, let $u \in \N^\phi(\Om)$ be given. Then, $u \ne 0$, $u \in \D^{1,p}_a(\Om,0)^\phi$, and $\norm{u}_{\D^{1,p}_a(\Om,0)}^p =\norm{u}_{L^q(\Om,\abs{x}^{-bq})}^p$. Since $C_c^\infty(\Om)^\phi$ is dense in $\D_a^{1,p}(\Om,0)^\phi$ by definition, we may select a sequence $(u_\alpha)$ in $C_c^\infty(\Om)^\phi$ such that $u_\alpha \to u$ in $\D_a^{1,p}(\Om,0)$, as $ \alpha \to \infty$. Without loss of generality, we may assume that each $u_\alpha\ne 0$. For each index $\alpha \in \NN$, we define
			\[
			t_\alpha := \left(\frac{\norm{u_\alpha}_{\D_a^{1,p}(\Om,0)}^p}{ \norm{u_\alpha}^q_{L^q(\Om,\abs{x}^{-bq})} } \right)^{\frac{1}{q-p}} > 0
			\]
			and set $v_\alpha := t_\alpha u_\alpha$. Then, $v_\alpha \in C_c^\infty(\Om)^\phi \subset \D^{1,p}_a(\Om,0)^\phi$ for each $\alpha \in\NN$ and $v_\alpha \in \N^\phi(\Om)$ since
			\begingroup
			\small
			\begin{align*}
				\norm{v_\alpha}_{\D_a^{1,p}(\Om,0)}^p&= \norm{t_\alpha u_\alpha}_{\D_a^{1,p}(\Om,0)}^p \\
				&= t_\alpha^p \norm{ u_\alpha}_{\D_a^{1,p}(\Om,0)}^p\\
				&= \left[\frac{\norm{u_\alpha}_{\D_a^{1,p}(\Om,0)}^p}{ \norm{u_\alpha}^q_{L^q(\Om,\abs{x}^{-bq})} } \right]^{\frac{p}{q-p}}  \norm{ u_\alpha}_{\D_a^{1,p}(\Om,0)}^p\\
				&= \left[\frac{\norm{u_\alpha}_{\D_a^{1,p}(\Om,0)}^p}{ \norm{u_\alpha}^q_{L^q(\Om,\abs{x}^{-bq})} } \right]^{\frac{q}{q-p}} \left[\frac{\norm{u_\alpha}_{\D_a^{1,p}(\Om,0)}^p}{ \norm{u_\alpha}^q_{L^q(\Om,\abs{x}^{-bq})} } \right]^{\frac{p-q}{q-p}}   \norm{ u_\alpha}_{\D_a^{1,p}(\Om,0)}^p\\
				&= t_\alpha^q  \left[\frac{ \norm{u_\alpha}^q_{L^q(\Om,\abs{x}^{-bq})}}{\norm{u_\alpha}_{\D_a^{1,p}(\Om,0)}^p} \right]^{\frac{q-p}{q-p}}   \norm{ u_\alpha}_{\D_a^{1,p}(\Om,0)}^p\\
				&= t_\alpha^q \norm{u_\alpha}^q_{L^q(\Om,\abs{x}^{-bq})}\\
				&= \norm{v_\alpha}^q_{L^q(\Om,\abs{x}^{-bq})}.
			\end{align*}
			\endgroup
			In addition, by continuity of the $\D_a^{1,p}(\Om, 0)$-norm and the CKN inequality, it follows from $u_\alpha \to u$ in $\D_a^{1,p}(\Om,0)$ that $t_\alpha \to 1$, as $\alpha \to \infty$. Consequently,
			\begin{align*}
				\norm{v_\alpha - u}_{\D_a^{1,p}(\Om,0)} &\le \norm{v_\alpha - u_\alpha}_{\D_a^{1,p}(\Om,0)} + \norm{u_\alpha - u}_{\D_a^{1,p}(\Om,0)}\\
				&= \norm{t_\alpha u_\alpha - u_\alpha}_{\D_a^{1,p}(\Om,0)} + o(1)\\
				&= (t_\alpha-1)\norm{u_\alpha}_{\D_a^{1,p}(\Om,0)} + o(1)\\
				&= o(1)
			\end{align*}
			whence we infer that $v_\alpha \to u$ strongly in $\D_a^{1,p}(\Om,0)$. 
			
			\item This follows by a verbatim extension of the argument used within Clapp-Rios \cite[Lemma 2.3]{clappRios}. \qedhere
		\end{enumerate}
	\end{proof}
	
	As an important point, we now verify that this minimal energy level $c^\phi(\Om)$ is independent of our choice of $G$-invariant domain $\Om$. It is precisely this energy invariance that will ultimately allow us to conclude that our extracted function indeed solves the entire problem posed in \eqref{eq:limitProblem}.
	
	\begin{lem}\label{lem:minimalEnergyInvariance}
		We have $c^\phi(\Om) = c^\phi$, where we define $c^\phi := c^\phi(\RR^n)$. 
	\end{lem}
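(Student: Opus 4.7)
The plan is to establish both $c^\phi(\mathbb{R}^n) \le c^\phi(\Om)$ and the reverse inequality. The first is immediate: as already noted in the text, extension by zero yields an isometric inclusion $\D_a^{1,p}(\Om,0)^\phi \hookrightarrow \D_a^{1,p}(\RR^n,0)^\phi$ which carries $\N^\phi(\Om)$ into $\N^\phi(\RR^n)$ while preserving $J$, so the infimum over the larger Nehari manifold cannot exceed $c^\phi(\Om)$.

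The substantive content lies in the reverse inequality, and the tool is the scale invariance of the problem. For $\lambda > 0$ and $u \in \D_a^{1,p}(\RR^n,0)^\phi$, I would consider the dilated function $v_\lambda(x) := \lambda^\gamma u(\lambda x)$, where $\gamma$ is the homogeneity exponent from \eqref{eq:exponent}. A routine change of variables (which is in fact what pins down $\gamma$) shows
\[
\norm{v_\lambda}_{\D_a^{1,p}(\RR^n,0)}^p = \norm{u}_{\D_a^{1,p}(\RR^n,0)}^p \quad\text{and}\quad \norm{v_\lambda}_{L^q(\RR^n,\abs{x}^{-bq})}^q = \norm{u}_{L^q(\RR^n,\abs{x}^{-bq})}^q,
\]
so $J(v_\lambda) = J(u)$ and $v_\lambda \in \N^\phi(\RR^n)$ whenever $u$ is. Because $G \subseteq O(n)$ consists of linear isometries, scalar dilations commute with the $G$-action, so $v_\lambda$ inherits the $\phi$-equivariance of $u$. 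Crucially, if $\mathrm{supp}(u) \subseteq \overline{B_R(0)}$, then $\mathrm{supp}(v_\lambda) \subseteq \overline{B_{R/\lambda}(0)}$, which shrinks to the origin as $\lambda \to \infty$.

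With the rescaling in hand, the argument runs as follows. Given an arbitrary $u \in \N^\phi(\RR^n)$, part (4) of Lemma \ref{lem:nehariProperties} produces a sequence $(u_k) \subseteq C_c^\infty(\RR^n)^\phi \cap \N^\phi(\RR^n)$ with $u_k \to u$ in $\D_a^{1,p}(\RR^n,0)$, so in particular $J(u_k) \to J(u)$. Since $\Om$ is open and contains the origin, it contains some small ball $B_r(0)$; for each $k$ there then exists $\lambda_k > 0$ large enough that $v_k := \lambda_k^\gamma u_k(\lambda_k \,\cdot\,)$ has support inside $B_r(0) \subseteq \Om$. Thus $v_k \in C_c^\infty(\Om)^\phi \cap \N^\phi(\Om)$ with $J(v_k) = J(u_k)$, which gives $c^\phi(\Om) \le J(v_k) \to J(u)$. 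Taking the infimum over $u \in \N^\phi(\RR^n)$ yields $c^\phi(\Om) \le c^\phi$, and combined with the first inequality completes the proof.

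I do not anticipate any serious obstacle here; the proof is pure scaling and density bookkeeping. The two essential ingredients are that $\Om$ contains the origin, so that rescaled bumps can always be squeezed into $\Om$, and that $G$ acts orthogonally, so that $\phi$-equivariance is preserved under dilation. The only computation is the elementary change-of-variables check confirming dilation invariance of both the weighted Dirichlet integral and the weighted $L^q$-integral, which also serves as a consistency check for the value of $\gamma$.
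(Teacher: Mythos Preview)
Your proposal is correct and follows essentially the same route as the paper: the easy inequality via extension by zero, followed by the reverse inequality obtained by approximating an arbitrary $u \in \N^\phi(\RR^n)$ with compactly supported $\phi$-equivariant Nehari elements (Lemma~\ref{lem:nehariProperties}(4)) and then dilating these into $\Om$ using the scale invariance furnished by the exponent $\gamma$. The only cosmetic difference is your choice of scaling convention $v_\lambda(x) = \lambda^\gamma u(\lambda x)$ versus the paper's $\lambda^{-\gamma} u(x/\lambda)$, which are of course the same transformation under $\lambda \leftrightarrow 1/\lambda$.
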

	\begin{proof}
		Because $\D_a^{1,p}(\Om,0)^\phi \subseteq \D_a^{1,p}(\RR^n,0)^\phi$ after an extension by 0 outside $\Om$, \iffalse we have that $u \in \D_a^{1,p}(\RR^n,0)$ after an extension by zero outside of $\Om$. In addition, every $g \in G$ is a linear isometry of $\RR^n$ and hence a bijection. Since $g\Om = \Om$, we have $gx \in \Om$ if and only if $x \in \Om$. Consequently, $u$ is also $\phi$-symmetric on the whole of $\RR^n$ and thus \fi  it is automatic that $c^\phi \le c^\phi(\Om)$. In order to establish the reverse inequality, let $u \in \N^\phi(\RR^n)$ and select a sequence $(u_\alpha)$ in $C_c^\infty(\RR^n)^\phi \cap \N^\phi(\RR^n)$ converging to $u$ in $\D_a^{1,p}(\RR^n,0)$ as $\alpha \to \infty$. Since each $u_\alpha$ has compact support and $\Om$ contains the origin, we may select for each index $\alpha \in \NN$ some $\lambda_\alpha > 0$ such that the rescaled function
		\[
		v_\alpha(x) := \lambda_\alpha^{-\gamma} u_\alpha\left(\frac{x}{\lambda_\alpha}\right)
		\]
		is compactly supported in $\Om$. Here, $\gamma > 0$ is the homogeneity exponent, associated to the CKN-inequality, defined in \eqref{eq:exponent}. By this homogeneity property, $\norm{v_\alpha}_{\D_a^{1,p}(\RR^n,0)} = \norm{u_\alpha}_{\D_a^{1,p}(\RR^n,0)}$ and $\norm{u_\alpha}_{L^q(\RR^n,0)} = \norm{v_\alpha}_{L^q(\RR^n,0)}$ which implies that $J(v_\alpha) = J(u_\alpha)$ and $v_\alpha \in \N^\phi(\Om)$. Finally, observe that
		\[
		c^\phi(\Om) \le J(v_\alpha), \quad \forall \alpha \in \NN
		\]
		whence $c^\phi(\Om) \le \lim_{\alpha \to \infty} J(v_\alpha) = \lim_{\alpha \to \infty} J(u_\alpha) = J(u)$. As $u \in \N^\phi(\RR^n)$ is arbitrary, it follows that $c^\phi(\Om) \le c^\phi$.
	\end{proof}

	\begin{cor}[Existence of $(PS)$-Sequence at the minimal energy level $c^\phi$]\label{cor:PSExistence}
		There exists a $(PS)$-sequence in $\D_a^{1,p}(\Om,0)^\phi$ at energy level $c^\phi$. Namely, there exists a sequence $(u_\alpha)$ in $\D_a^{1,p}(\Om,0)^\phi$ such that
		\[
		\lim_{\alpha \to \infty} J(u_\alpha) = c^\phi \quad \text{and} \quad J^\prime(u_\alpha) \to 0 \text{ in } \D_a^{-1,p^\prime}(\Om,0)^\phi. 
		\]
	\end{cor}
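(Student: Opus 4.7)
The plan is to apply the (non-symmetric) Mountain Pass Lemma of Ambrosetti-Rabinowitz to the restriction of the $C^1$-functional $J$ to the closed subspace $\D_a^{1,p}(\Om,0)^\phi$ of the reflexive Banach space $\D_a^{1,p}(\Om,0)$. Since the subspace is itself a Banach space and $J$ restricts to a $C^1$-functional, the Mountain Pass Lemma (in its version that does not assume the Palais-Smale condition) will yield a sequence $(u_\alpha)$ in $\D_a^{1,p}(\Om,0)^\phi$ with $J(u_\alpha) \to c_{MP}$ and $J'(u_\alpha) \to 0$ in the dual, where $c_{MP}$ is the mountain-pass min-max value computed over the paths $\Gamma$ in $\D_a^{1,p}(\Om,0)^\phi$.

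To apply this, I would first verify the mountain pass geometry. Using the CKN inequality one has, for all $u \in \D_a^{1,p}(\Om,0)^\phi$,
\[
J(u) \ge \frac{1}{p}\norm{u}_{\D_a^{1,p}(\Om,0)}^p - \frac{S}{q}\norm{u}_{\D_a^{1,p}(\Om,0)}^q,
\]
and since $q>p$ the right-hand side is strictly positive for $0<\norm{u}_{\D_a^{1,p}(\Om,0)} \le \rho$ with $\rho$ small; pick $\alpha>0$ and $\rho>0$ so that $J(u) \ge \alpha$ whenever $\norm{u}_{\D_a^{1,p}(\Om,0)}=\rho$. For the far endpoint, property (P1) ensures (via the result of Bracho-Clapp-Marzantowicz cited in the introduction) that $C_c^\infty(\Om)^\phi$ is non-trivial, so I can fix $w \in C_c^\infty(\Om)^\phi\setminus\{0\}$ and observe, as in the proof of part (3) of Lemma \ref{lem:nehariProperties}, that $J(tw) = \frac{t^p}{p}\norm{w}_{\D_a^{1,p}(\Om,0)}^p - \frac{t^q}{q}\norm{w}_{L^q(\Om,\abs{x}^{-bq})}^q \to -\infty$ as $t\to +\infty$, so that some $t_0>0$ gives $\norm{t_0 w}_{\D_a^{1,p}(\Om,0)}>\rho$ and $J(t_0 w)<0$.

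Having the mountain pass geometry in hand, the Mountain Pass Lemma produces the desired sequence at level $c_{MP}$. Identifying $c_{MP}$ with $c^\phi$ is then immediate from results already at our disposal: the characterization of $c^\phi(\Om)$ as a min-max from Lemma \ref{lem:nehariProperties}(3) shows that $c_{MP}=c^\phi(\Om)$, and Lemma \ref{lem:minimalEnergyInvariance} gives $c^\phi(\Om) = c^\phi$. Hence $(u_\alpha)$ is a $(PS)$-sequence at energy level $c^\phi$.

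I do not foresee a real obstacle here; the statement is a direct application of a standard deformation-lemma argument, the only subtle point being to interpret the $J'$-convergence correctly. Namely, the Mountain Pass Lemma delivers convergence to zero of the restrictions $J'(u_\alpha)|_{\D_a^{1,p}(\Om,0)^\phi}$ in the dual of the symmetric space; since that dual is precisely $\D_a^{-1,p'}(\Om,0)^\phi$ in the paper's notation, this is exactly the statement we want.
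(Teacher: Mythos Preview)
Your proposal is correct and takes essentially the same approach as the paper: both verify mountain pass geometry for $J$ on $\D_a^{1,p}(\Om,0)^\phi$ and then invoke the mountain pass theorem without the Palais--Smale condition (the paper cites Willem's Theorems 2.8--2.9, which is precisely the Ambrosetti--Rabinowitz result you use), followed by the identifications $c_{MP}=c^\phi(\Om)=c^\phi$ via Lemmas~\ref{lem:nehariProperties}(3) and~\ref{lem:minimalEnergyInvariance}. The only cosmetic difference is that the paper checks the geometry by showing $c^\phi(\Om)>0=\max\{J(\gamma(0)),J(\gamma(1))\}$ directly from the Nehari lower bound of Lemma~\ref{lem:nehariProperties}(2), whereas you verify the classical sphere condition via the CKN inequality; these are equivalent routes to the same hypothesis.
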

	\begin{proof}
		By Lemma \ref{lem:minimalEnergyInvariance}, it suffices to construct such a $(PS)$-sequence at the energy level $c^\phi(\Om)$.	Letting $\Gamma$ be as in Lemma \ref{lem:nehariProperties}, notice that given any path $\gamma \in \Gamma$, the composition $t \mapsto J(\gamma(t))$ is a continuous mapping $[0,1] \to \RR$ and is therefore bounded. Consequently,
		\[	
		c^\phi(\Om) = \inf_{\gamma \in \Gamma} \max_{t \in [0,1]} J(\gamma(t)) < \infty.
		\]
		Next, there exists by Lemma \ref{lem:nehariProperties} a constant $\mu_0 > 0$ such that for all $u \in \N^\phi(\Om)$ there holds $\norm{u}_{\D_a^{1,p}(\Om,0)} \ge \mu_0$. It follows that
		\begin{align*}
			J(u) &= \frac{\norm{u}_{\D_a^{1,p}(\Om,0)} }{p} - \frac{\norm{u}_{L^q(\Om,\abs{x}^{-bq})} }{q}\\
			&= \norm{u}^p_{\D^{1,p}_a(\Om,0)} \left( \frac{1}{p}- \frac{1}{q}\right)\\
			&\ge \mu_0^p\left( \frac{1}{p}- \frac{1}{q}\right)\\
			& > 0
		\end{align*}
		uniformly over $u \in \N^\phi(\Om)$. Thus, 
		\[
		\inf_{\gamma \in \Gamma} \max_{t \in [0,1]} J(\gamma(t))= c^\phi(\Om) \ge \mu_0^p\left( \frac{1}{p}- \frac{1}{q}\right) > 0
		\]
		whilst $J(\gamma(0)) = 0$ and $J(\gamma(1)) \le 0$ for all $\gamma \in \Gamma$. Consequently, the assertion follows after an application of Theorems 2.8-2.9 from Willem \cite{willem2012minimax}.
	\end{proof}

	\section{Palais-Smale Symmetric Criticality}
	
	For this section, $G \subseteq O(n)$ is a closed subgroup and $\phi : G\to\{\pm 1\}$ a continuous group homomorphism such that (P1) holds. Moreover, we let $\Om \subseteq \mathbb{R}^n$ be a $G$-invariant domain containing the origin. The next proposition asserts that Palais-Smale sequences in the symmetric subspace $\D_a^{1,p}(\Om,0)^\phi$ are also Palais-Smale within the global space $\D_a^{1,p}(\Om, 0)$. The proof of this proposition utilizes a calculation from Clapp-Rios \cite[Lemma A.1]{clappRios}, together with a straightforward application of Minkowski's inequality for integrals to make the duality connection. We note that this observation allows for a more direct application of the known Struwe-type decomposition for weighted equations of this type.

	\begin{prop}\label{prop:PSSymmetry}
		Suppose $(u_\alpha) \subset \D^{1,p}_a(\Om,0)^\phi$ is a Palais-Smale sequence for $J$ in $\D_a^{1,p}(\Om, 0)^\phi$. That is, as $\alpha \to \infty$, $J(u_\alpha)$ converges to some real number and
		\[
		J^\prime(u_\alpha) \to 0 \text{ in } \D^{-1,p^\prime}_a(\Om,0)^\phi.
		\]
		Then, $(u_\alpha)$ is a Palais-Smale sequence for $J$ in the non-symmetrized space $\D_a^{1,p}(\Om, 0)$. That is,
		\[
		J^\prime(u_\alpha) \to 0 \text{ in } \D^{-1,p^\prime}_a(\Om,0).
		\]
		Moreover, if $u \in \D_a^{1,p}(\Om,0)^\phi$ solves \eqref{eq:probOmSym}, then $\inner{J^\prime(u), h} = 0$ for all $h \in \D_a^{1,p}(\Om,0)$.
	\end{prop}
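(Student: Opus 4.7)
The plan is to exploit the fact that $G$, being a closed subgroup of the compact group $O(n)$, is itself compact and thus admits a unique normalized Haar measure $\mu$. I would define the averaging operator
\[
\mathcal{S} : \D_a^{1,p}(\Om,0) \to \D_a^{1,p}(\Om,0), \quad \mathcal{S}h(x) := \int_G \phi(g) h(g^{-1}x)\, \d\mu(g),
\]
and verify that it lands in $\D_a^{1,p}(\Om,0)^\phi$. The equivariance $\mathcal{S}h(g_0 x) = \phi(g_0)\mathcal{S}h(x)$ is a consequence of the left-invariance of $\mu$ together with $\phi$ being a homomorphism, via the change of variable $g \mapsto g_0 g$. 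Minkowski's inequality for integrals, together with the observation that for each fixed $g \in G$ the slice $x \mapsto \phi(g)h(g^{-1}x)$ has the same weighted $\D_a^{1,p}$-norm as $h$ (by orthogonality of $g$ and the $G$-invariance of $|x|^{-ap}$), furnishes
\[
\norm{\mathcal{S}h}_{\D_a^{1,p}(\Om,0)} \le \norm{h}_{\D_a^{1,p}(\Om,0)}.
\]

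The crux of the proof is to establish the duality identity $\inner{J^\prime(u), h} = \inner{J^\prime(u), \mathcal{S}h}$ for every $u \in \D_a^{1,p}(\Om,0)^\phi$ and every $h \in \D_a^{1,p}(\Om,0)$. After inserting the definition of $\mathcal{S}h$ into $\inner{J^\prime(u), \mathcal{S}h}$ and applying Fubini's theorem, I change variable $y = g^{-1}x$ in each integral over $\Om$. The weights $|x|^{-ap}$, $|x|^{-bq}$ and Lebesgue measure are preserved because $g$ is an isometry. For the principal term, differentiating the relation $u(gy) = \phi(g)u(y)$ yields $\nabla u(gy) = \phi(g)\,g\,\nabla u(y)$, whence
\[
|\nabla u(gy)|^{p-2}\nabla u(gy) = \phi(g)\,|\nabla u(y)|^{p-2}\,g\,\nabla u(y).
\]
This factor of $\phi(g)$ combines with the one built into $\mathcal{S}h$ to give $\phi(g)^2 = 1$, while the two copies of $g$ acting on $\nabla u(y)$ and $\nabla h(y)$ cancel by orthogonality. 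Integrating out the now trivial variable $g$ against the normalized measure recovers $\inner{J^\prime(u), h}$ exactly. A parallel argument handles the subcritical term using $|u(gy)|^{q-2}u(gy) = \phi(g)|u(y)|^{q-2}u(y)$. This is essentially the computation in Clapp-Rios \cite[Lemma A.1]{clappRios}, adapted to the weighted setting.

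Combining these two ingredients, the conclusion follows quickly. For any $h \in \D_a^{1,p}(\Om,0)$ with $\norm{h}_{\D_a^{1,p}(\Om,0)} \le 1$, I estimate
\[
|\inner{J^\prime(u_\alpha), h}| = |\inner{J^\prime(u_\alpha), \mathcal{S}h}| \le \norm{J^\prime(u_\alpha)}_{\D_a^{-1,p^\prime}(\Om,0)^\phi}\,\norm{\mathcal{S}h}_{\D_a^{1,p}(\Om,0)} \le \norm{J^\prime(u_\alpha)}_{\D_a^{-1,p^\prime}(\Om,0)^\phi},
\]
and take the supremum over such $h$ to obtain $\norm{J^\prime(u_\alpha)}_{\D_a^{-1,p^\prime}(\Om,0)} \to 0$. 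The second assertion specializes to any solution $u$ of \eqref{eq:probOmSym}: for every $h \in \D_a^{1,p}(\Om,0)$ we have $\inner{J^\prime(u), h} = \inner{J^\prime(u), \mathcal{S}h} = 0$, since $\mathcal{S}h \in \D_a^{1,p}(\Om,0)^\phi$ and $u$ is a critical point on the symmetric subspace.

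The only real obstacle I anticipate is the careful tracking of the sign $\phi(g) = \pm 1$ through the quasilinear term $|\nabla u|^{p-2}\nabla u$ after the change of variables. The key point is that $\phi(g) \in \{\pm 1\}$ does not disrupt the absolute value, so $|\nabla u(gy)|^{p-2} = |\nabla u(y)|^{p-2}$, and a single sign factor arising from $\nabla u(gy)$ pairs with the one in the definition of $\mathcal{S}h$ to produce $\phi(g)^2 = 1$; this is precisely what makes the symmetric-criticality argument go through for every $1 < p < n$, rather than only the linear case $p = 2$.
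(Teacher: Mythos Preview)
Your proposal is correct and follows essentially the same route as the paper: define a Haar-averaging projector onto the $\phi$-equivariant subspace, bound it by Minkowski's integral inequality, and verify the identity $\inner{J'(u),h}=\inner{J'(u),\mathcal{S}h}$ via a change of variables that exploits the $\phi$-equivariance of $u$ and the isometry of each $g$; this is precisely the paper's argument (which likewise cites Clapp--Rios \cite[Lemma A.1]{clappRios}), differing only cosmetically in that the paper averages $h(gx)$ on smooth test functions and closes by density, whereas you average $h(g^{-1}x)$ directly on $\D_a^{1,p}(\Om,0)$.
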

	\begin{proof}
		Let $h \in C_c^\infty(\Om)$ be an arbitrary test function. Let $\mu$ denote the normalized Haar measure on $G$, so that $\mu(G) = 1$, and define
		$$
		\vartheta(x) = \int_G\phi(g)h(gx)\,\d\mu.
		$$
		This rule clearly defines a function $\vartheta \in C_c^\infty(\Om)^\phi$. Moreover, by Minkowski's inequality for integrals 
		\[
		\begin{aligned}
			\norm{\vartheta}_{\D_a^{1,p}(\Om, 0)}
			&=\left(\int_\Om \abs{\int_G\phi(g) g^{-1}\nabla h(gx)\,\d\mu}^p \abs{x}^{-ap}\d{x}\right)^{1/p}\\
			&\le \int_G \left(\int_\Om \abs{\phi(g) g^{-1}\nabla h(gx)}^p\abs{x}^{-ap}\d{x}\right)^{1/p}\d\mu.
		\end{aligned}
		\]
		Then, simplifying, using that $g, g^{-1}$ are linear isometries, and performing a change of variables, we obtain
		\begin{align}
			\norm{\vartheta}_{\D_a^{1,p}(\Om, 0)}
			&\le\int_G\left(\int_\Om \abs{\nabla h(gx)}^p\abs{gx}^{-ap}\d{x}\right)^{1/p}\d\mu\notag\\
			&=\int_G\left(\int_\Om \abs{\nabla h(y)}^p\abs{y}^{-ap}\d{y}\right)^{1/p}\d\mu\notag\\
			&=\norm{h}_{\D_a^{1,p}(\Om, 0)}\label{eq:sym_norm_comp}
		\end{align}
		Next, as in Clapp-Rios \cite[Lemma A.1]{clappRios}, if $u\in \D_a^{1,p}(\Om , 0)^\phi$ then $u(gx) = \phi(g)u(x)$ and $\phi(g)\nabla u(x) = g^{-1}\nabla u (gx)$ so
		\[
		\begin{aligned}
			&\inner{J^\prime(u), \vartheta}\\
			&= \int_{\Om}\abs{\nabla u(x)}^{p-2}\nabla u(x) \cdot\nabla \vartheta(x)\abs{x}^{-ap}\d{x} - \int_\Om \abs{u(x)}^{q-2}u(x)\vartheta(x)\abs{x}^{-bq}\d{x}\\
			&= \int_{\Om} \int_G\abs{\nabla u(x)}^{p-2}\phi(g)\nabla u(x) \cdot (g^{-1}\nabla h(gx)) \abs{x}^{-ap} \d\mu\d{x} \\
			&\quad - \int_\Om \int_G\abs{u(x)}^{q-2}\phi(g)u(x)h(gx)\abs{x}^{-bq}\d\mu \d{x}\\
			&= \int_{\Om} \int_G\abs{\nabla u(gx)}^{p-2}(g^{-1}\nabla u(gx)) \cdot (g^{-1}\nabla h(gx))\abs{x}^{-ap}\d\mu \d{x} \\
			&\quad - \int_\Om \int_G\abs{u(x)}^{q-2}u(gx)h(gx)\abs{x}^{-bq}\d\mu \d{x}.
		\end{aligned}
		\]
		Then, using that $g^{-1}$ is a linear isometry and applying Fubini's Theorem, we get
		\[
		\begin{aligned}
			&\inner{J^\prime(u), \vartheta}\\
			&= \int_G\int_{\Om} \abs{\nabla u(gx)}^{p-2}\nabla u(gx) \cdot \nabla h(gx) \abs{gx}^{-ap}\d{x}\d\mu \\
			&\quad - \int_G\int_{\Om} \abs{u(gx)}^{q-2}u(gx)h(gx) \abs{gx}^{-bq}\d{x}\d\mu\\
			&= \int_G\int_{\Om} \abs{\nabla u(y)}^{p-2}\nabla u(y) \cdot \nabla h(y) \abs{y}^{-ap}\d{y}\d\mu \\
			&\quad - \int_G\int_{\Om} \abs{u(y)}^{q-2}u(y)h(y) \abs{y}^{-bq}\d{y}\d\mu\\
			&=\inner{J^\prime(u), h}.
		\end{aligned}
		\]
		In particular,
		\[
		\inner{J^\prime(u_\alpha), h} = \inner{J^\prime(u_\alpha), \vartheta}.
		\]
		Consequently, if $u$ solves \eqref{eq:probOmSym}, then $\inner{J^\prime(u),h} = 0$ for all $h \in \D_a^{1,p}(\Om,0)$.
		Finally, combining this with \eqref{eq:sym_norm_comp} and recalling that $\vartheta \in C_c^\infty(\Om)^\phi \subseteq \D_a^{1,p}(\Om ,0)^\phi$, we see that
		\[
		\begin{aligned}
			\abs{\inner{J^\prime(u_\alpha), h}} 
			= \abs{\inner{J^\prime(u_\alpha), \vartheta}} 
			&\le \norm{J^\prime(u_\alpha)}_{\D_a^{-1, p^\prime}(\Om, 0)^\phi}\norm{\vartheta}_{\D_a^{1,p}(\Om,0)}\\
			&\le \norm{J^\prime(u_\alpha)}_{\D_a^{-1, p^\prime}(\Om, 0)^\phi}\norm{h}_{\D_a^{1,p}(\Om, 0)}.
		\end{aligned}
		\]
		The assertion now follows by a density argument. 
	\end{proof}

	\section{Symmetric Global Compactness}
	
	We now establish a symmetric global compactness result for Palais-Smale sequences in the symmetrized space. Throughout this section, \(G\) denotes a closed subgroup of the orthogonal group \(O(n)\) acting on a $G$-invariant bounded domain \(\Omega \subset \mathbb{R}^n\), containing the origin, and \(\phi : G \to \{\pm 1\}\) is a continuous homomorphism of groups. Furthermore, $(G,\phi)$ are assumed to satisfy (P1)-(P2).

	\begin{prop}\label{prop:struwea<b}
		Assume $a < b$.
		Suppose there exists a Palais-Smale sequence $(u_\alpha) \subset \D^{1,p}_a(\Om,0)^\phi$ for $J$ such that, as $\alpha \to \infty$,
		\[
		J(u_\alpha) \to c^\phi, \quad J^\prime(u_\alpha) \to 0 \text{ in } \D^{-1,p^\prime}_a(\Om,0)^\phi.
		\]
		Then, there exists a non-trivial solution to \eqref{eq:limitProblemSym}.
	\end{prop}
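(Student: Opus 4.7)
The plan is to extract a non-trivial $\phi$-equivariant solution of \eqref{eq:limitProblemSym} by combining a weak-limit argument with a bubble-extraction procedure exploiting the Struwe-type decomposition of \cite{Chernysh}. First, I would show $(u_\alpha)$ is bounded in $\D_a^{1,p}(\Om,0)$ via the standard manipulation
\[
c^\phi + o(1) + o(1)\norm{u_\alpha}_{\D_a^{1,p}(\Om,0)} = J(u_\alpha) - \tfrac{1}{q}\inner{J^\prime(u_\alpha), u_\alpha} = \left(\tfrac{1}{p} - \tfrac{1}{q}\right)\norm{u_\alpha}^p_{\D_a^{1,p}(\Om,0)},
\]
using $q > p$. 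Passing to a subsequence, $u_\alpha \rightharpoonup u_\infty$ weakly in $\D_a^{1,p}(\Om,0)$, and the weak limit lies in $\D_a^{1,p}(\Om,0)^\phi$ since this is a closed linear subspace of a reflexive Banach space. Standard arguments (weak continuity of the Fr\'echet derivative in the spirit of \cite[Chapter 2]{mscChernysh}) then show $u_\infty$ is a weak solution of \eqref{eq:probOmSym}. If $u_\infty \ne 0$, extending by zero outside $\Om$ places it in $\D_a^{1,p}(\RR^n,0)^\phi$, and Proposition \ref{prop:PSSymmetry} guarantees it solves \eqref{eq:limitProblemSym}.

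Suppose instead $u_\infty = 0$. The key step is to invoke the bubble-tree decomposition from \cite{Chernysh}. By Proposition \ref{prop:PSSymmetry}, $(u_\alpha)$ is also a Palais-Smale sequence for $J$ on the full space $\D_a^{1,p}(\Om,0)$, so this decomposition applies. Because $c^\phi > 0$ by Lemma \ref{lem:nehariProperties}, strong convergence to $u_\infty = 0$ is impossible, hence at least one non-trivial bubble must appear to account for the energy. In the regime $a < b$, \cite[Theorem 1]{Chernysh} forces every bubble to concentrate at the origin: each bubble $V$ is a non-trivial solution of \eqref{eq:limitProblem} arising as a weak limit of a sequence of the form $\lambda_\alpha^\gamma u_\alpha(\lambda_\alpha \cdot)$, with $\lambda_\alpha \to 0$ or $\lambda_\alpha \to \infty$, where $\gamma$ is the homogeneity exponent from \eqref{eq:exponent}.

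It remains to verify that $V$ inherits $\phi$-equivariance. Since every $g \in G \subseteq O(n)$ is a linear isometry fixing the origin, it commutes with dilations: $g(\lambda_\alpha x) = \lambda_\alpha(gx)$. Consequently,
\[
\lambda_\alpha^\gamma u_\alpha(\lambda_\alpha gx) = \lambda_\alpha^\gamma u_\alpha(g\lambda_\alpha x) = \phi(g)\, \lambda_\alpha^\gamma u_\alpha(\lambda_\alpha x),
\]
and passing to the weak limit yields $V(gx) = \phi(g)V(x)$ a.e.\ for each $g \in G$. Thus $V \in \D_a^{1,p}(\RR^n,0)^\phi$ is a non-trivial solution of \eqref{eq:limitProblemSym}, as required.

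The principal obstacle I anticipate is the proper invocation of the Struwe decomposition of \cite{Chernysh}: the subtlety is not that bubbles exist (this is forced by the strict positivity of $c^\phi$ together with the assumption $u_\infty = 0$), but rather that one must carefully track the concentration profile in order to use the hypothesis $a < b$ to rule out bubbles at points other than the origin, including bubbles escaping to infinity or to a half-space boundary. It is precisely this restriction that makes the $a=b\ne 0$ case (handled separately) require the additional property (P3); in the present setting $a<b$, the origin-centered concentration guaranteed by \cite[Theorem 1]{Chernysh} is what allows the equivariance-transfer argument above to close the proof without needing (P3).
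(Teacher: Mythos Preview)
Your proposal is correct and follows essentially the same route as the paper's proof: both first pass to a weak limit (using Proposition~\ref{prop:PSSymmetry} to upgrade the Palais--Smale condition to the full space), dispose of the case where this limit is non-trivial, and then in the degenerate case invoke the concentration analysis of \cite[Theorem~1]{Chernysh}, using $a<b$ to force the bubble profile to be an origin-centered dilation $\lambda_\alpha^\gamma u_\alpha(\lambda_\alpha\,\cdot\,)$ whose weak limit inherits $\phi$-equivariance because $\D_a^{1,p}(\RR^n,0)^\phi$ is weakly closed. The paper additionally tracks the energy to pin down $J(V)=c^\phi$, but this is not needed for the statement as written, and your explicit commutation computation $g(\lambda_\alpha x)=\lambda_\alpha(gx)$ is just an unpacking of what the paper phrases as membership of the rescaled sequence in the symmetrized subspace.
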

	
	\begin{proof}
		By an application of Proposition \ref{prop:PSSymmetry}, the sequence $(u_\alpha)$ is Palais-Smale in $\D_a^{1,p}(\Om,0)$ with energy level $c^\phi$. Consequently, by \cite[Lemma 6.1]{Chernysh}, there exists $v_0 \in \D_a^{1,p} (\Om,0)$ such that after passing to a subsequence, there holds $u_\alpha \rightharpoonup v_0$ in $\D_a^{1,p} (\Om,0)$,
		\begin{enumerate}
			\item $\nabla u_\alpha \to \nabla v_0$ pointwise a.e. on $\Om$,
			\item $J^\prime(v_0) = 0$ on $\D_a^{1,p}(\Om,0)$,
			\item $J(u_\alpha - v_0) \to c^\phi - J(v_0)$,
			\item $J^\prime(u_\alpha - v_0) \to 0$ in $\D_a^{-1,p^\prime}(\Om,0)$.
		\end{enumerate}
		In fact, since $\D_a^{1,p} (\Om,0)^\phi$ is a (weakly) closed subspace of $\D_a^{1,p} (\Om,0)$, we must have $v_0 \in \D_a^{1,p} (\Om,0)^\phi$. Furthermore, it may assumed that $u_\alpha \to v_0$ pointwise a.e. on $\Om$. 
		
		We now distinguish two possible cases. First, if $v_0 \ne 0$, then $v_0 \in \N^\phi(\Om)$ so $J(v_0) \ge c^\phi$ whence $c^\phi - J(v_0) \le 0$. However, being a Palais-Smale sequence for $J$, we see that $(u_\alpha - v_0)$ has non-negative limiting energy, i.e. by \cite[Lemma 2.4]{Chernysh}
		\[
		\liminf_{\alpha \to \infty} J(u_\alpha - v_0) \ge 0.
		\]
		Hence, we infer that $c^\phi - J(v_0) \ge 0$ as well yielding $J(v_0) = c^\phi$. Thus, after an extension by $0$, we have $v_0 \in \N^\phi(\RR^n)$ and so $v_0$ solves \eqref{eq:limitProblemSym}. 
		
		It remains to handle the case where $v_0 = 0$. In this case, proceeding as in \cite[Theorem 1]{Chernysh}, we find sequences $(y_\alpha)$ in $\overline{\Om}$ and $(\lambda_\alpha)$ in $(0, \infty)$ such that
		\[
		\delta = \sup_{y\in\overline{\Om}}\int_{B(y, \lambda_\alpha)}\abs{u_\alpha}^q\abs{x}^{-bq}\d{x} = \int_{B(y_\alpha, \lambda_\alpha)}\abs{u_\alpha}^q\abs{x}^{-bq}\d{x}
		\]
		where $\delta > 0$ is chosen sufficiently small. Arguing as in Step 4 of \cite[Theorem 1]{Chernysh}, because $a < b$, we can show that (up to a subsequence)
		\[
		\frac{y_\alpha}{\lambda_\alpha} \to x_0
		\]
		for some $x_0\in\RR^n$. We may also assume that $\abs{x_0 - \frac{y_\alpha}{\lambda_\alpha}} < 1$ for all indices $\alpha \in \NN$. Then, setting
		\[
		v_\alpha(x) := \lambda_\alpha^{\gamma}u_\alpha(\lambda_\alpha x)\,,
		\]
		we obtain a bounded sequence in $\D_a^{1,p}(\RR^n,0)^\phi$ such that, for all $\alpha \in \NN$,
		\[
		\int_{B(x_0, 2)}\abs{v_\alpha}^q\abs{x}^{-bq}\d{x} \ge \delta.
		\] 
		Up to a subsequence, we may suppose $(v_\alpha)$ converges weakly to some $v \in \D_a^{1,p}(\RR^n, 0)$. By the inequality above, it follows that $v$ is non-trivial by a duality argument (see, for instance, the proof of Step 6 in \cite[Theorem 1]{Chernysh} or Step 3 in Mercuri-Willem \cite[Theorem 1.2]{mercuri-willem}). Moreover, arguing as in Step 7 in \cite[Theorem 1]{Chernysh}, we may suppose that $\lambda_\alpha \to 0$.

		Notice that $v$ is indeed an element of $\D_a^{1,p}(\RR^n,0)^\phi$ since  $\D_a^{1,p}(\RR^n,0)^\phi$ is a closed subspace of  $\D_a^{1,p}(\RR^n,0)$ and $v$ is, by definition, the weak limit of a sequence in $\D_a^{1,p}(\RR^n,0)^\phi$. Proceeding now as in \cite[Proposition 6.2]{Chernysh} we see that $v \in \D_a^{1,p}(\RR^n,0)^\phi$ is a solution to problem \eqref{eq:limitProblem}, and hence to \eqref{eq:limitProblemSym}. Furthermore, being that $v$ is non-trivial, $J(v) \ge c^\phi$ and, defining
		\[
		w_\alpha(z) := u_\alpha(z) - \lambda_\alpha^{-\gamma}v\left(\frac{z}{\lambda_\alpha}\right)\,,
		\]
		we have
		\begin{enumerate}
			\item $J(w_\alpha ) \to c^\phi - J(v)$;
			\item $J^\prime(w_\alpha) \to 0$ strongly in the dual of $\D_a^{1,p}(\Om,0)$.
		\end{enumerate}
		As before, $\liminf J(w_\alpha) \ge 0$ and, yet, $c^\phi - J(v) \le 0$. It thereby follows that $J(v) = c^\phi$ and so $v$ is a non-trivial solution to \eqref{eq:limitProblemSym}.

	\end{proof}
	
	\begin{prop}\label{prop:struwea=b}
		Assume $a = b \ne 0$ and $(G, \phi)$ is such that (P3) holds true. Suppose there exists a Palais-Smale sequence $(u_\alpha) \subset \D^{1,p}_a(\Om,0)^\phi$ for $J$ such that, as $\alpha \to \infty$,
		\[
		J(u_\alpha) \to c^\phi, \quad J^\prime(u_\alpha) \to 0 \text{ in } \D^{-1,p^\prime}_a(\Om,0)^\phi.
		\]
		Then, there exists non-trivial solution to \eqref{eq:limitProblemSym}.
	\end{prop}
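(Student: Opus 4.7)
The plan is to mirror the proof of Proposition \ref{prop:struwea<b}, substituting the Struwe-type decomposition tailored to the limiting case $a=b\ne 0$ (\cite[Theorem 2]{Chernysh}) for the one used in the case $a<b$, and invoking (P3) crucially to rule out concentration at non-zero points. The first step is to apply Proposition \ref{prop:PSSymmetry} to view $(u_\alpha)$ as a Palais-Smale sequence for $J$ in the unsymmetrized space $\D_a^{1,p}(\Om,0)$ at level $c^\phi$. Passing to a subsequence, one obtains a weak limit $v_0$, which lies in $\D_a^{1,p}(\Om,0)^\phi$ by weak closedness of the equivariant subspace, satisfies $J'(v_0) = 0$, and yields the usual splitting of energy and derivative for $u_\alpha - v_0$.

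If $v_0 \ne 0$, then after extension by $0$ outside $\Om$ we have $v_0 \in \N^\phi(\RR^n)$, so $J(v_0) \ge c^\phi$; combined with the non-negativity of the limiting energy of $u_\alpha - v_0$ (see \cite[Lemma 2.4]{Chernysh}), this forces $J(v_0) = c^\phi$ and $v_0$ solves \eqref{eq:limitProblemSym}. So the interesting case is $v_0 = 0$. Here I would apply \cite[Theorem 2]{Chernysh} to extract sequences $(y_\alpha) \subset \overline{\Om}$ and $(\lambda_\alpha) \subset (0,\infty)$ together with a profile arising from $v_\alpha(x) := \lambda_\alpha^\gamma u_\alpha(\lambda_\alpha x)$, which is bounded in $\D_a^{1,p}(\RR^n,0)$ and, after a subsequence, weakly converges to some $v$. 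Non-triviality of $v$ follows by the same duality/ball-mass argument as in Step 6 of \cite[Theorem 1]{Chernysh} (or as in Mercuri-Willem \cite[Theorem 1.2]{mercuri-willem}).

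The crucial step, which distinguishes this case from the $a<b$ setting, is to show that the concentration occurs at the origin, i.e.\ $y_\alpha/\lambda_\alpha \to 0$, so that $v$ solves the weighted entire problem rather than an unweighted one in $\RR^n$ or a half-space. Suppose the contrary; then along a subsequence $y_\alpha/\lambda_\alpha$ converges to some $x_0 \ne 0$ (the case $|y_\alpha/\lambda_\alpha|\to\infty$ is treated analogously). Because each $u_\alpha$ is $\phi$-equivariant and each $g \in G$ is an isometry of $\RR^n$ fixing the weight $|x|^{-ap}$, the same concentration analysis applied to $u_\alpha\circ g$ produces a bubble centred at $gy_\alpha$ at scale $\lambda_\alpha$. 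By (P3), the orbit $\operatorname{Orb}_G(x_0)$ is infinite; since $\lambda_\alpha \to 0$ (by the argument of Step 7 of \cite[Theorem 1]{Chernysh}), distinct points in the rescaled orbit generate asymptotically disjoint bubbles for large $\alpha$, each carrying a fixed positive energy quantum bounded below by $c^\phi$. This contradicts the boundedness of $(J(u_\alpha))$. Hence $y_\alpha/\lambda_\alpha \to 0$, and $v \in \D_a^{1,p}(\RR^n,0)^\phi$ by weak closedness; arguing as in \cite[Proposition 6.2]{Chernysh} shows $v$ solves \eqref{eq:limitProblemSym}. Setting $w_\alpha(z) := u_\alpha(z) - \lambda_\alpha^{-\gamma}v(z/\lambda_\alpha)$, the standard energy accounting gives $J(w_\alpha) \to c^\phi - J(v)$ and $J'(w_\alpha) \to 0$; non-negativity of the residual limiting energy combined with $J(v) \ge c^\phi$ forces $J(v) = c^\phi$.

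The main obstacle is the disjointness/uniform-energy argument in the previous paragraph: one must show rigorously, within the bubble-tree framework of \cite[Theorem 2]{Chernysh}, that the translated copies of a bubble centred at $gy_\alpha$ for distinct $g \in G$ interact negligibly for $\alpha$ large, so that their energies genuinely add. This requires care because the orbit of $x_0$ under $G$ may accumulate; however, after rescaling by $\lambda_\alpha \to 0$, any two distinct orbit points yield centres separated by a distance tending to infinity in the rescaled variable, which (together with the standard decay of the bubble profile in $\D_a^{1,p}$-norm outside large balls) suffices to ensure the splitting of energies needed to contradict boundedness of $(J(u_\alpha))$.
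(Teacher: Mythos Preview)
Your overall scheme matches the paper's, but there is a real over-shoot in the concentration step that also introduces a gap. You assert that one must show $y_\alpha/\lambda_\alpha\to 0$; the paper only proves (and only needs) that $y_\alpha/\lambda_\alpha$ is \emph{bounded}, i.e.\ converges along a subsequence to some $x_0\in\RR^n$. The point is that the rescaling $v_\alpha(x)=\lambda_\alpha^\gamma u_\alpha(\lambda_\alpha x)$ is centred at the origin and preserves the weight exactly, so $v_\alpha\in\D_a^{1,p}(\RR^n,0)^\phi$ and its weak limit $v$ automatically solves the \emph{weighted} entire problem; boundedness of $y_\alpha/\lambda_\alpha$ merely guarantees that the mass $\delta$ sits inside $B(x_0,2)$ in the rescaled variable, which yields non-triviality of $v$. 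There is no risk of producing an unweighted limit, and no need to force $x_0=0$.

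This matters because your orbit-disjointness argument actually fails for bounded $x_0\ne 0$: in that regime $y_\alpha\sim\lambda_\alpha x_0$, so $|g_iy_\alpha-g_jy_\alpha|\sim\lambda_\alpha|g_ix_0-g_jx_0|$ is of the \emph{same} order as the scale $\lambda_\alpha$, and the balls $B(g_iy_\alpha,\lambda_\alpha)$ need not be disjoint. The paper only has to rule out the case $|y_\alpha|/\lambda_\alpha\to\infty$, where (after normalizing $y_\alpha/|y_\alpha|\to y$ and using (P3) to pick finitely many $g_1,\dots,g_m$ with $g_iy$ pairwise distinct) one gets $|g_iy_\alpha-g_jy_\alpha|/\lambda_\alpha\to\infty$, hence genuinely disjoint balls. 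Finally, the paper's contradiction is more elementary than your energy-quantum claim: by $\phi$-equivariance and the change of variables $z=g_ix$, each ball $B(g_iy_\alpha,\lambda_\alpha)$ carries weighted $L^q$-mass exactly $\delta$, so the total mass is at least $m\delta$ for arbitrary $m$, contradicting boundedness of $(u_\alpha)$. This sidesteps your assertion that each bubble carries energy $\ge c^\phi$, which is suspect anyway since bubbles concentrating away from the origin in the $a=b$ decomposition solve the \emph{unweighted} problem and carry a different minimal energy.
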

	
	\begin{proof} As before, Proposition \ref{prop:PSSymmetry} implies that the given (PS)-sequence is also Palais-Smale when considered within the larger space $\D_a^{1,p}(\Om,0)$. Hence, after passing to a subsequence, there exists a weak limit $v_0 \in \D_a^{1,p} (\Om,0)$ such that $u_\alpha \rightharpoonup v_0$ in $\D_a^{1,p} (\Om,0)$ and pointwise a.e. on $\Om$, as $\alpha \to \infty$. Moreover,
		\begin{enumerate}
			\item $\nabla u \to \nabla v_0$ pointwise a.e. on $\Om$,
			\item $J^\prime(v_0) = 0$ on $\D_a^{1,p}(\Om, 0)$,
			\item $\phi(u_\alpha - v_0) \to c^\phi - \phi(v_0)$,
			\item $\phi^\prime(u_\alpha - v_0) \to 0$ in $\D_a^{-1,p^\prime}(\Om,0)$.
		\end{enumerate}
		Of course, being the weak limit of a sequence in $\D_a^{1,p}(\Om,0)^\phi$, we must have that $v_0$ is $\phi$-equivariant, i.e. $v_0 \in \D_a^{1,p}(\Om,0)^\phi$. 
		We now distinguish two possible cases. First, if $v_0 \ne 0$, then we proceed exactly as in the first part of Proposition \ref{prop:struwea<b}. If instead $v_0 = 0$, we once again select sequences $(y_\alpha)$ in $\overline{\Om}$ and $(\lambda_\alpha)$ in $(0, \infty)$ such that
		\[
		\delta = \sup_{y\in\overline{\Om}}\int_{B(y, \lambda_\alpha)}\abs{u_\alpha}^q\abs{x}^{-bq}\d{x} = \int_{B(y_\alpha, \lambda_\alpha)}\abs{u_\alpha}^q\abs{x}^{-bq}\d{x}
		\]
		where $\delta > 0$ is chosen sufficiently small. Then, by leveraging (P3), we can show that (up to a subsequence)
		\begin{equation}\label{eq:psPropeq1}
			\frac{y_\alpha}{\lambda_\alpha} \to x_0
		\end{equation}
		for some $x_0\in\RR^n$. Indeed, should this sequence be unbounded, we may pass to a subsequence in such a way that
		\[
		\frac{\abs{y_\alpha}}{\lambda_\alpha} \to \infty, \quad \abs{y_\alpha} > 4\lambda_\alpha.
		\]
		Then, each $y_\alpha \ne 0$ so $y_\alpha/\abs{y_\alpha}$ is a sequence on the unit sphere. Passing to yet another subsequence, we may assume that
		\[
		\frac{y_\alpha}{\abs{y_\alpha}} \to y, \quad \text{as $\alpha \to \infty$}.
		\]
		Appealing to condition (P3) and borrowing ideas from Clapp-Rios \cite{clappRios},
		given any $m \in \NN$, we may select elements $g_1,\dots,g_m \in G$ such that $g_i y \ne g_j y$, for all $i \ne j$. Consequently, there exists $\epsilon > 0$ so that
		\[
		\abs{g_i y - g_j y} \ge 2\epsilon > 0, \quad \forall i \ne j.
		\]
		Using then that $\frac{y_\alpha}{\abs{y_\alpha}} \to y$, it follows from this last identity that
		\begin{equation}
			\abs{g_i \frac{y_\alpha}{\abs{y_\alpha}} -g_j \frac{y_\alpha}{\abs{y_\alpha}}}  \ge \epsilon
		\end{equation}
		for all $\alpha \in \NN$ sufficiently large. Hence, for all such $\alpha$,
		\begin{equation}
			\frac{\abs{ g_i y_\alpha - g_j y_\alpha }}{\lambda_\alpha} \ge \epsilon \frac{\abs{y_\alpha}}{\lambda_\alpha}.
		\end{equation}
		Because the right hand side tends to $+\infty$, we infer that $$B(g_i y_\alpha, \lambda_\alpha) \cap B(g_j y_\alpha, \lambda_\alpha)=\varnothing,$$ for all $i \ne j$ and $\alpha$ large. But,
		\begin{align*}
			\delta &= \int_{B(y_\alpha,\lambda_\alpha)} \abs{u_\alpha}^q\abs{x}^{-bq}\d{x}\\
			&=  \int_{B(g_i y_\alpha,\lambda_\alpha)} \abs{u_\alpha(g_i^{-1} z)}^q\abs{g_i^{-1} z}^{-bq}\d{z}, && (z := g_i x)\\
			&= \int_{B(g_i y_\alpha,\lambda_\alpha)} \abs{\phi(g_i^{-1})u_\alpha(z)}^q\abs{z}^{-bq}\d{z}\\
			&=	\int_{B(g_i y_\alpha,\lambda_\alpha)} \abs{u_\alpha(z)}^q\abs{z}^{-bq}\d{z}
		\end{align*}
		It thereby follows that, for all $\alpha$ sufficiently large,
		\begin{align*}
			\int_{\RR^n} \abs{u_\alpha}^q\abs{x}^{-bq}\d{x} \ge \sum_{i=1}^m \int_{B(g_i y_\alpha,\lambda_\alpha)} \abs{u_\alpha(z)}^q\abs{z}^{-bq}\d{z} = m\delta,
		\end{align*}
		which contradicts the fact that $(u_\alpha)$ is a bounded sequence in $\D_a^{1,p}(\RR^n,0)$. In short: we have proven that $y_\alpha/\lambda_\alpha$ is bounded and thus, passing to a subsequence, we may assume \eqref{eq:psPropeq1}. From here, we complete the argument by following the second part of the proof from Proposition \ref{prop:struwea<b}.
	\end{proof}

	We are now ready to supply the proof of our main result. 
	
	\begin{proof}[Proof of Theorem \ref{thm:main}]
		Let $n \ge 4$ and put $k = k(n) := \left\lfloor \frac{n}{2}\right\rfloor - 1$. For each $\alpha \ge 0$ and every tuple $\m = (m_1,\dots, m_k)$ such that $(\alpha,\m)$ satisfies condition \eqref{eq:primeCondition}, the construction detailed in \S2 yields a group and homomorphism pair $(G_{\alpha,\m}, \phi_{\alpha,\m})$ satisfying properties (P1)-(P2) by virtue of Lemma \ref{lem:primeConstruct}. 
		
		If $a < b$ in the problem parameters of \eqref{eq:limitProblem}, an application of Corollary \ref{cor:PSExistence} together with Proposition \ref{prop:struwea<b}, for $\Om$ equal to the unit ball in $\RR^n$, produces, for any such pair $(\alpha,\m)$, a non-trivial solution $u_{\alpha,\m}$ to problem \eqref{eq:limitProblemSym}  which is $(\alpha,\m)$-symmetric. Since $\phi_{\alpha,\m}$ is surjective and $u_{\alpha,\m}$ is non-trivial, it is automatic that $u_{\alpha,\m}$ is sign-changing. Then, Proposition \ref{prop:PSSymmetry} applied with $\Om = \RR^n$ ensures that $u_{\alpha,\m}$ solves the unrestricted problem \eqref{eq:limitProblem}. 
		
		If we are instead in the case $a=b \ne 0$, and the pair $(\alpha,\m)$ satisfies the additional condition \eqref{eq:primeCondition2}, the argument proceeds exactly as in the case $a < b$, but replaces Proposition \ref{prop:struwea<b} with Proposition \ref{prop:struwea=b}.
		
		Finally, we treat the case $a=b=0$. Although condition (P3) will not hold unless \eqref{eq:primeCondition2} is satisfied, it is easy to see from the definition of $G_{\alpha,\m}$ and \eqref{eq:orbitForm} that the weaker condition
		\[
		\dim \left( \operatorname{Orb}_{G_{\alpha,\m}}(x)\right) > 0 \quad \text{or}
		\quad \operatorname{Orb}_{G_{\alpha,\m}}(x) = \{x\}
		\]
		is valid for all $x \in \RR^n$. Consequently, an application of Clapp-Rios \cite[Theorem 3.1]{clappRios} with $\Om$ equal to the unit ball ensures the existence of a non-trivial solution $u_{\alpha,\m}$ of the problem \eqref{eq:unweighted} that is $(\alpha,\m)$-symmetric. Furthermore, being that $\phi_{\alpha,\m}$ is surjective by construction, each $u_{\alpha,\m}$ is sign-changing. 
		
		The second part of the theorem follows by combining Propositions \ref{prop:distinct}-\ref{prop:distinct2}.
		
	\end{proof}
	
	\bibliographystyle{plain}
	\bibliography{mybib}
	
\end{document}